\documentclass{article}

\usepackage{microtype}
\usepackage{graphicx}
\usepackage[caption=false]{subfig}
\usepackage{booktabs} 

\usepackage{hyperref}


\usepackage[accepted]{icml2020}


\usepackage{amsmath,amssymb, amsthm}
\usepackage{bbm}
\usepackage[capitalise,noabbrev]{cleveref}
\let\hat\widehat

\newtheorem{theorem}{Theorem}

\newtheorem{corollary}[theorem]{Corollary}

\newtheorem{proposition}[theorem]{Proposition}
\newtheorem{remark}[theorem]{Remark}
\newtheorem{definition}{Definition}

\usepackage{amsmath}
\DeclareMathOperator*{\argmax}{arg\,max}

\newcommand{\Tau}{\mathcal{T}}

\newcommand{\E}{\mbox{$\mathbb{E}$}}
\newcommand{\D}{\mbox{$\mathcal{D}$}}
\newcommand{\F}{\mbox{$\mathcal{F}$}}
\newcommand{\A}{\mbox{$\mathcal{A}$}}

\icmltitlerunning{On Conditional Versus Marginal Bias in Multi-Armed Bandits}

\begin{document}

\twocolumn[
\icmltitle{On Conditional Versus Marginal Bias in Multi-Armed Bandits}



\icmlsetsymbol{equal}{*}

\begin{icmlauthorlist}
\icmlauthor{Jaehyeok Shin}{cmu}
\icmlauthor{Alessandro Rinaldo}{cmu}
\icmlauthor{Aaditya Ramdas}{cmu,cmu2}
\end{icmlauthorlist}

\icmlaffiliation{cmu}{Department of Statistics and Data Science, Carnegie Mellon University}

\icmlaffiliation{cmu2}{Machine Learning Department, Carnegie Mellon University}

\icmlcorrespondingauthor{Jaehyeok Shin}{shinjaehyeok@cmu.edu}
\icmlkeywords{Conditional bias, Multi-armed bandit}

\vskip 0.3in
]



\printAffiliationsAndNotice{}  

\begin{abstract}
    The bias of the sample means of the arms in multi-armed bandits is an important issue in adaptive data analysis that has recently received considerable attention in the literature. Existing results relate in precise ways the sign and magnitude of the bias to various sources of data adaptivity, but do not apply to the conditional inference setting in which the sample means are computed only if some specific conditions are satisfied. In this paper, we characterize the sign of the conditional bias of monotone functions of the rewards, including the sample mean. Our results hold for arbitrary conditioning events and leverage natural monotonicity properties of the data collection policy. We further demonstrate, through several examples from sequential testing and best arm identification, that the sign of the conditional and marginal bias of the sample mean of an arm can be different, depending on the conditioning event. Our analysis offers new and interesting perspectives on the subtleties of assessing the bias in data adaptive settings.
\end{abstract}

\section{Introduction}
In modern data analysis, it is often the case that both the data collection and the selection of the target for statistical inference are determined adaptively, i.e. in a manner that depends on the realization of the data observed so far. In these fairly common data-adaptive scenarios, the validity of traditional procedures for statistical inference (that were designed to work in nonadaptive or in i.i.d. settings) can be severely affected. 

This issue is perhaps best exemplified within the stochastic multi-armed bandit (MAB) framework \citep{robbins1952some}. The data are collected sequentially in stages, during which the analyst draws a sample from one among finitely many unknown distributions or arms. At each stage, the determination of whether no more samples are to be acquired ({\it adaptive stopping}), the choice of the arm to draw from ({\it adaptive sampling}) and the selection of the target arm to analyze once the experiment has terminated ({\it adaptive choosing}) are all made based on previously observed data.  

The combined effect of these sources of data adaptivity will impact the correctness of standard, nonadaptive statistical procedures in ways that are generally difficult to discern and to quantify. 
 The recent literature on adaptive data analysis in MAB settings has focused primarily on the fundamental problem of mean estimation \citep{xu2013estimation, villar2015multi, bowden2017unbiased, nie2018adaptively, shin2019bias}.  It is now known that the sample mean of an arm is a biased estimator of the true mean and that the sign and magnitude of the bias can be related to adaptive sampling, stopping and choosing in precise ways.

The current theoretical understanding of the bias in MAB experiments is, however limited in two aspects. First, virtually all results concern the bias in mean estimation and do not cover other functionals of the arms. Secondly, and perhaps more interestingly, the existing guarantees cover only the  {\it marginal bias} of the sample mean, i.e., the bias obtained by accounting for all possible outcomes of the MAB experiment. However, in practice, one is often interested in the sample means of the arms only when certain outcomes have occurred. For instance,  the analyst may wish to evaluate the sample mean of a given arm only when that arm was identified as the best arm or, in a sequential framework corresponding to a MAB experiment with only one arm, when the null hypothesis has been rejected or when the random criterion for determining whether enough samples have been collected has been met. In all these cases, it is of interest to compute the {\it conditional bias}, i.e., the bias of the sample mean given a certain {\it conditioning
event,} such as that the arm of interest turned out to be the best arm. A priori, it is not at all clear how the sign of the conditional bias is affected by the choice of the conditioning event and by other sources of data adaptivity (e.g., sampling and stopping), or whether the signs of the marginal and conditional bias should match.  See \cref{fig:margianl_conditional_example} for illustrations of marginal and conditional biases. 

\begin{figure}%
    \centering
    \subfloat[Marginal bias]{{\includegraphics[scale =  0.24]{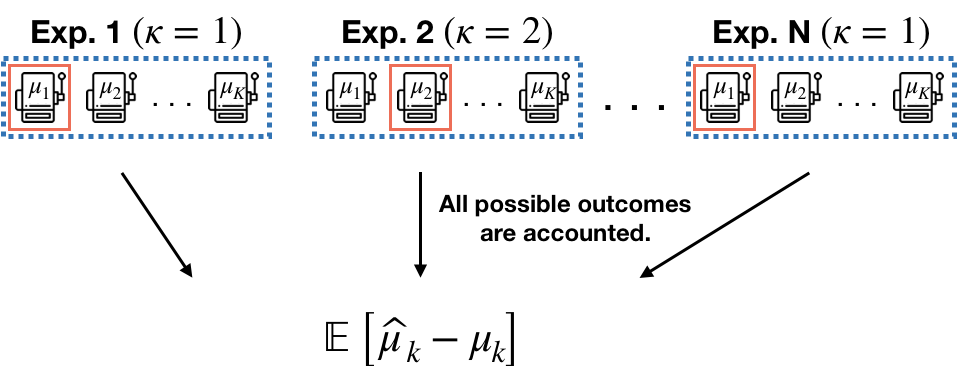} }}%
    \qquad
    \subfloat[Conditional bias]{{\includegraphics[scale =  0.24]{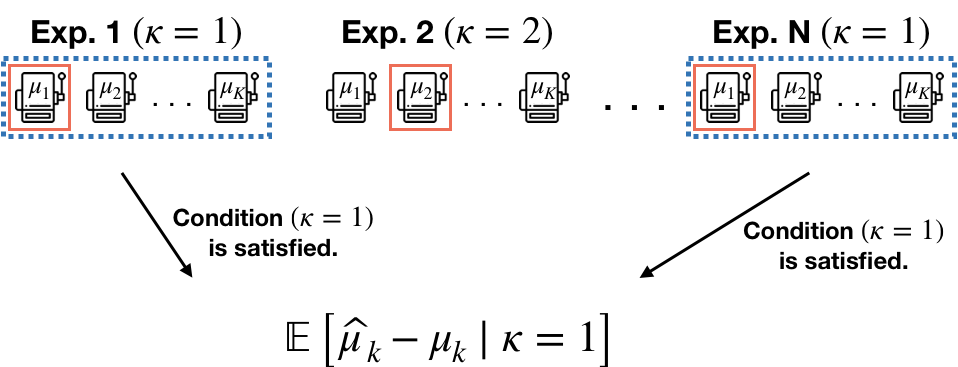} }}%
    \caption{ Illustrations of marginal and conditional biases in the $K$ prototypes example. (a) Marginal bias is obtained by accounting for all possible outcomes. (b) Conditional bias is obtained by accounting for outcomes related to a conditioning event.}%
    \label{fig:margianl_conditional_example}%
\end{figure}

As a concrete example, suppose we have $K$ prototypes of an online service and wish to test whether the potential average revenue of each prototype (i.e., arm), $\mu_k$, would be larger than a pre-specified threshold $\mu_0 >0$ based on a stream of test user samples. The usual sequential testing approach will be based on an appropriate upper stopping boundary and will reject each null $\mu_k \leq \mu_0$ if the corresponding sample mean has crossed such boundary during the testing period. If the null is rejected, then we conclude the prototype as a promising one. If not, we disregard it.
It is well known \citep[see, e.g.,][]{starr1968remarks, shin2019bias} that for each prototype, the sample mean based on data collected through this sequential testing procedure is positively biased: that is, for each $k \in [K]:=\{1,\ldots,K\}$, $\mathbb{E}\left[\hat{\mu}_k \right] \geq \mu_k$  regardless of whether the true mean $\mu_k$ is larger or smaller than the threshold $\mu_0$. This positive bias result can provide a useful warning signal about possible overestimation of the true revenues. 
At the same time, however, without careful consideration of the conditioning effect, we may end up with a false sense of comfort with low sample mean estimates from ``disregarded'' prototypes. Indeed,  we can naively expect the true revenues of the disregarded prototypes should be even lower than the observed estimates based on the positive bias result $\mathbb{E}\left[\hat{\mu}_k \right] \geq \mu_k$. In fact, this would be a wrong conclusion: conditioned on the disregarding event $C$, the sample mean is negatively biased and we have $\mathbb{E}\left[\hat{\mu}_k \mid C \right] \leq \mu_k$, as demonstrated in \cref{subSec::eg1}.

In this paper, we make the following contributions:
\begin{itemize}
    \item We derive in \Cref{thm::cond_bias} sufficient conditions for determining the sign of the conditional bias that hold under arbitrary conditioning events and rely on certain natural, highly interpretable monotonicity properties of the rules used for adaptive sampling, stopping and selecting. Our analysis captures in a rigorous and intuitive way the interaction between adaptivity arising from the data collection stage (sampling and stopping) with adaptivity in the choice of the target for inference. 
        \item We characterize the sign of the conditional bias of monotone functions of the rewards of each arm, which includes the sample mean as a special case.
    \item In \Cref{sec::simulations}, we demonstrate with several examples in best arm identification and sequential testing problems how the marginal and conditional bias of the sample means of the arms can have opposite signs. These are, we believe, instances of a  general, important phenomenon of theoretical and practical relevance. 
\end{itemize}
Overall, our results advance our ability to assess the impact of the bias in adaptive data analysis problems and offer several new and interesting insights on this important issue.

 The rest of this paper is organized as follows. In Section \ref{sec::acda_setting}, we briefly formalize the three notions of adaptivity by introducing a stochastic MAB framework. Section~\ref{sec::sign-bias} derives results on when the bias can be positive or negative. In Section~\ref{sec::simulations}, we demonstrate the correctness of our theoretical predictions through simulations in a variety of practical situations. We end with a brief summary in Section~\ref{sec::summary}, and for reasons of space, we defer all proofs to the Appendix.


\section{The Stochastic Multi-Armed Bandit Model}
\label{sec::acda_setting}
In this section we formally describe the MAB experiment setting, detailing the sources of data adaptivity.  
Let  $P_1, \dots, P_K$ be $K$ distributions of $K$ arms with finite means $\mu_k = \int x \mathrm{d}P_k(x)$.  If it is clear from context, every inequality and equality between two random variables is understood in the almost sure sense.

\subsection{Three Sources of Adaptivity in MABs}
A general stochastic MAB algorithm consists of the following sampling, stopping, and choosing rules:

\begin{itemize}
    \item {\bf Sampling:} For each time $t \geq 1$, choose a possibly randomized index of arm $A_t$ from a multinomial distribution $\mathrm{Multi}(1, \nu_t) \in \{0,1, \dots, K\}$. Then, we draw a sample (also called a reward) $Y_t$ from the chosen arm $P_{A_t}$. Here, $\nu_t \in [0,1]^K$ refers to conditional probabilities of sampling each arm defined by
    \[
     \nu_t(k):= \nu_t(k \mid \D_{t-1}) \in [0,1]~~\forall k \in [K],
    \]
    with the constraint $\sum_{k=1}^K \nu_t(k) = 1$,
    and $\D_{t-1}$ refers to observed data up to time $t-1$ including all external randomness. If each $\nu_t$ does not depend on the previous data $\D_{t-1}$, we call it a nonadaptive sampling rule. 
    
    With a proper choice of $\nu_t$, this sampling rule captures a broad class of existing methods including fixed design, random allocation, $\epsilon$-greedy \citep{sutton1998introduction}, upper confidence bound (UCB) algorithms \citep{auer2002finite, audibert2009minimax, garivier2011kl,kalyanakrishnan2012pac, jamieson_lil_2014} and Thompson sampling \citep{thompson1933likelihood, agrawal2012analysis,kaufmann2012thompson}. 
    \item {\bf Stopping:} Given a sampling rule, let $\{\mathcal{F}_t\}$ be a filtration such that each $A_t$ is $\F_{t-1}$-measurable and each $Y_t$ and $\D_t$ are $\F_t$-measurable.     Let $\Tau$ be a stopping time with respect to the filtration $\{\mathcal{F}_t\}$. Then, for each time $t$, the stopping event $\{\Tau = t\}$ is $\F_t$-measurable which can be used to characterize a stopping rule of a MAB algorithm.
    
    For example, the following stopping time corresponds to the stopping rule in which we stop whenever the absolute difference between sample means for arm $1$ and arm $2$ exceeding a fixed threshold $\delta >0$:
    \[
    \Tau := \inf\left\{t \geq 1: \left|\hat{\mu}_{1}(t) - \hat{\mu}_{2} (t)\right| > \delta  \right\}.
    \]
    Here, we refer $\hat{\mu}_k(t)$ to the sample mean for arm $k$ defined as
\[
\hat{\mu}_k(t) := \frac{S_k(t)}{N_k(t)},
\]
    where $S_k(t)$ and $N_k(t)$ are the running sum and number of draws for arm $k$ defined respectively $S_k(t) := \sum_{s=1}^{t} \mathbbm{1}(A_s = k) Y_s$ and $N_k(t) := \sum_{s=1}^{t} \mathbbm{1}(A_s = k)$ for each $k \in [K]$ and $t \geq 1$.\footnote{Throughout this paper, we assume $N_k(t) \geq 1$ whenever we make a statement about the sample mean $\hat{\mu}_k(t)$.} Note that, by the construction, $S_k(t)$, $\hat{\mu}_k(t)$ are $\F_t$-measurable and $N_k(t)$ is $\F_{t-1}$-measurable for each $k \in [K]$ and $t \geq 1$.
    
    If a stopping rule is nonadaptive, that is, if the corresponding stopping time is $\F_0$-measurable, we use $T$ instead of $\Tau$ to refer the stopping time which includes all fixed time and, more generally, all data-independent stopping rules. 
\item {\bf Choosing:}  After stopping, let $\D_\Tau$ denote collected data up to the stopping time $\Tau$. Based on $\D_{\Tau}$ with possible randomization, we choose a data-dependent arm based on a choosing rule $\kappa : \mathcal{D}_\Tau \mapsto [K]$. In this paper, we denote the index of chosen arm $\kappa(\mathcal{D}_\Tau)$ by $\kappa$ for simplicity. 

For example, in the best arm identification, we often choose the best arm as the arm with the largest sample mean at the stopping time $\Tau$. In this case, the choosing rule is given as $\kappa = \argmax_{k\in[K]} \hat{\mu}_k(\Tau)$. If $\kappa$ does not depend on the observed data, we call it a nonadaptive choosing rule. 
\end{itemize}


\subsection{The Tabular Model of Stochastic MABs} \label{sec::counterfactual_setting}
To analyze the theoretical properties of a MAB experiment, it is convenient to express it using an infinite tabular representation of the arm rewards ( Section 2.2 of \citet{shin2019bias}, \citet[Section 4.6]{lattimore2018bandit}).

In the tabular model, we assume that each observation $Y_t$ comes from an imaginary $\mathbb{N} \times K$ table or stacks of samples $ \{X_{i,k}^*\}_{i \in \mathbb{N}, k \in [K]}:= X_{\infty}^*$ where each $X_{i,k}^*$ is an independent draw from $P_k$, and the observation $Y_t$ is equal to $X_{i,k}^*$ if $N_k(t) = i$ and $A_t = k$ .

Now, given a MAB algorithm, let $W_t$ be a random variable which  accounts for  all possible external randomness of the algorithm for each time $t$. Next, we define $\D^*_\infty =X^*_{\infty} \cup \{W_{t}\}_{t=0}^\infty$ as the collection of all possible arm rewards and external randomness.
Then, given $\D_{\infty}^*$, every property and outcome of a MAB experiment becomes a deterministic function of $\D_{\infty}^*$. In particular  $\Tau$, $\kappa$ and $N_k(t)$, for each $t$ and $k$, can be expressed as deterministic functions of $\D_{\infty}^*$. See Section 2.2 in \citet{shin2019bias} for details. 

\section{Monotonicity Property and Signs of Biases} \label{sec::sign-bias}

In a recent work by \citet{shin2019bias}, a notion of monotonicity of the data collecting was introduced to characterize the marginal bias of a chosen sample mean at a stopping time. In this section, we summarize the previous result with some minor modifications in notations.

\begin{definition} \label{def::monotone_sampling_strategy}
    For each $k \in [K]$, we say a MAB algorithm satisfies the ``monotonic increasing (or decreasing) property for arm $k$'' if for any $i \in \mathbb{N}$, the function  $\D^*_\infty \mapsto   \mathbbm{1}\left(\kappa = k\right) / N_k(\Tau)$, is an increasing (or decreasing) function of $X_{i,k}^*$  while keeping all other entries in $\D^*_\infty$ fixed.     Further, we say that
    \begin{itemize}
        \item A sampling rule is optimistic for arm $k$ if the function  $\D^*_\infty \mapsto  N_k(t)$ is an increasing function of $X_{i,k}^*$  while keeping all other entries in $\D^*_\infty$ fixed for any fixed $i \in \mathbb{N}$ and $t \geq 1$;
        \item  A stopping rule is optimistic for arm $k$ if the function $\D^*_\infty \mapsto  \Tau$ is a decreasing function of $X_{i,k}^*$ while keeping all other entries in $\D^*_\infty$ fixed for any fixed $i \in \mathbb{N}$;
        \item  A choosing rule is optimistic for arm $k$ if the function $\D^*_\infty\mapsto\mathbbm{1}(\kappa =k)$ is an increasing function of $X_{i,k}^*$ while keeping all other entries in $\D^*_\infty$ fixed for any fixed $i \in \mathbb{N}$.
    \end{itemize}
\end{definition}

It can be shown that many MAB algorithms, including $\epsilon$-greedy, UCB, and Thompson sampling algorithms, have optimistic sampling rules. As for optimistic stopping, a sequential testing procedure with an upper stopping boundary yields a typical example of an optimistic stopping rule based on the stopping time $\Tau := \inf\left\{t: \hat{\mu}(t) > U(t)\right\}$ where $t \mapsto U(t)$ is a deterministic function characterizing the upper stopping boundary. Finally, the index of the arm with the largest sample mean, $\kappa = \argmax_{k \in [K]} \hat{\mu}_k(\Tau)$ can be viewed as a typical optimistic choosing rule.

 \subsection{The Sign of the Marginal Bias}
 \citet{shin2019bias} demonstrated how certain natural monotonicity properties of the MAB experiment can be exploited to determine the sign of the marginal bias and of the conditional bias given conditioning events corresponding to adaptive choosing of arms. For completeness, we recall this result next.

\begin{proposition}[Theorem~7 in \citet{shin2019bias}] \label{prop::marginal_bias}
    Given a MAB algorithm, suppose we have $\mathbb{E}\left[N_k(\Tau)\right] < \infty$ and $N_k(\Tau) \geq 1$ for all $k$ with $\mathbb{P}\left(\kappa =k\right) > 0$. If the MAB algorithm satisfies the monotonic decreasing property for each arm $k$ with $\mathbb{P}\left(\kappa =k\right) > 0$ then we have 
    \begin{equation} \label{eq::sign_of_cov_bias_increasing_each_k}
    \E\left[ \hat{\mu}_k (\Tau)\mid \kappa = k \right] \leq \mu_k,
    \end{equation}
which also also implies that
    \begin{equation}\label{eq::sign_of_cov_bias_increasing}
    \E\left[ \hat{\mu}_\kappa(\Tau) - \mu_\kappa \right] \leq 0.  \end{equation} 
    Similarly if the MAB algorithm satisfies the monotonic increasing property for each arm $k$ with $\mathbb{P}\left(\kappa =k\right) > 0$ then we have 
    \begin{equation} \label{eq::sign_of_cov_bias_decreasing_each_k}
    \E\left[ \hat{\mu}_k(\Tau)\mid \kappa = k \right] \geq \mu_k,
    \end{equation}
    which also implies that
    \begin{equation}\label{eq::sign_of_cov_bias_decreasing}
    \E\left[ \hat{\mu}_\kappa(\Tau) - \mu_\kappa \right] \geq 0. 
    \end{equation} 
    If each arm has a bounded distribution then the condition $\mathbb{E}\left[N_k(\Tau)\right] < \infty$ can be dropped. \end{proposition}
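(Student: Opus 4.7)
The plan is to prove the conditional statement \eqref{eq::sign_of_cov_bias_increasing_each_k}; the marginal inequality \eqref{eq::sign_of_cov_bias_increasing} then follows by summation, since $\E[\hat{\mu}_\kappa(\Tau) - \mu_\kappa] = \sum_k \E[(\hat{\mu}_k(\Tau) - \mu_k)\mathbbm{1}(\kappa=k)]$, and \eqref{eq::sign_of_cov_bias_decreasing_each_k}--\eqref{eq::sign_of_cov_bias_decreasing} are obtained by running the argument below with inequalities reversed. Working in the tabular model of \cref{sec::counterfactual_setting}, I note that since $A_t$ is $\F_{t-1}$-measurable the centered running sum $M^{(k)}_t := S_k(t) - \mu_k N_k(t)$ is an $\{\F_t\}$-martingale, and optional sampling (justified by $\E[N_k(\Tau)] < \infty$, or by uniform integrability when arms are bounded) gives $\E[M^{(k)}_\Tau] = 0$. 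Setting $\phi := \mathbbm{1}(\kappa = k)/N_k(\Tau)$, the identities $\phi N_k(\Tau) = \mathbbm{1}(\kappa = k)$ and $\phi S_k(\Tau) = \hat{\mu}_k(\Tau)\mathbbm{1}(\kappa=k)$ yield
\begin{equation*}
\E\bigl[\hat{\mu}_k(\Tau)\mathbbm{1}(\kappa=k)\bigr] - \mu_k \P(\kappa=k) = \E\bigl[\phi\, M^{(k)}_\Tau\bigr] = \mathrm{Cov}(\phi,\, M^{(k)}_\Tau),
\end{equation*}
so the entire task reduces to showing this covariance is nonpositive.

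For the covariance I would deploy a conditional Harris--FKG (Chebyshev) argument against the product structure inherent to the tabular model: conditioning on $\D^*_\infty$ minus the $k$-th column, the entries $(X^*_{i,k})_i$ remain independent. By \cref{def::monotone_sampling_strategy}, $\phi$ is coordinate-wise non-increasing in each $X^*_{i,k}$. Since $M^{(k)}_\Tau$ itself is not coordinate-wise monotone---changing $X^*_{i,k}$ shifts the direct contribution $X^*_{i,k}-\mu_k$ but can also, through $N_k(\Tau)$, add or remove other centered rewards from the sum---I first use the layer-cake identity $1/N_k(\Tau) = \sum_{n \geq N_k(\Tau)} 1/[n(n+1)]$ (valid since $N_k(\Tau) \geq 1$ whenever $\phi > 0$) and the truncating stopping time $\Tau_n := \Tau \wedge T_k(n+1)$, where $T_k(n+1)$ is the $(n+1)$-st arm-$k$ draw time, to rewrite the covariance as
\begin{equation*}
\sum_{n \geq 1}\, \frac{1}{n(n+1)}\,\mathrm{Cov}\bigl(\mathbbm{1}(\kappa=k,\, N_k(\Tau) \leq n),\, M^{(k)}_{\Tau_n}\bigr).
\end{equation*}
On $\{N_k(\Tau) \leq n\}$ we have $\Tau_n = \Tau$, and optional sampling on the bounded $N_k(\Tau_n) \leq n+1$ gives $\E[M^{(k)}_{\Tau_n}] = 0$. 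In each layer the indicator $\mathbbm{1}(\kappa=k,\, N_k(\Tau) \leq n)$ is coordinate-wise non-increasing in each $X^*_{i,k}$: the assumption forces $\mathbbm{1}(\kappa=k)$ to be non-increasing ($\phi$ cannot rise from $0$ to a positive value), and on $\{\kappa=k\}$ the non-decreasingness of $N_k(\Tau)$ implied by the assumption makes $\mathbbm{1}(N_k(\Tau) \leq n)$ non-increasing. Provided $M^{(k)}_{\Tau_n}$ is coordinate-wise non-decreasing on the support of this indicator, conditional Harris--FKG gives each layer-$n$ covariance a non-positive sign, and summing over $n$ closes \eqref{eq::sign_of_cov_bias_increasing_each_k}.

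The main obstacle is verifying this last monotonicity of $M^{(k)}_{\Tau_n}$ in each $X^*_{i,k}$ for $i \leq n+1$ on the event $\{\kappa=k,\, N_k(\Tau) \leq n\}$: the cap $N_k(\Tau_n) \leq n+1$ restricts attention to finitely many coordinates, and on the support in question the monotonic decreasing property ensures that increasing $X^*_{i,k}$ only adds (never removes) centered rewards to $M^{(k)}_{\Tau_n}$, since $N_k(\Tau)$ is non-decreasing on $\{\kappa=k\}$ and arm-$k$ draws are produced in a fixed order from the tabular column. A careful case analysis of the boundary where $\kappa$ switches away from $k$---handled by a coupling that propagates the change in $X^*_{i,k}$ through the MAB sample path---delivers the required monotonicity. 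With both factors of the layer-$n$ covariance monotonically compatible, Harris--FKG furnishes its non-positive sign, and summation over $n$ completes the proof.
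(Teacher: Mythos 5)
Your reduction of \eqref{eq::sign_of_cov_bias_increasing_each_k} to $\mathrm{Cov}(\phi, M^{(k)}_\Tau)\leq 0$ via Wald's identity, the layer-cake localization $1/N_k(\Tau)=\sum_{n\geq N_k(\Tau)}1/[n(n+1)]$, and the coordinatewise non-increasingness of $\mathbbm{1}(\kappa=k,\,N_k(\Tau)\leq n)$ are all sound (the last follows from the monotone property exactly as you argue). But the step you yourself flag as ``the main obstacle'' is not a technicality to be settled by case analysis: the required monotonicity of $M^{(k)}_{\Tau_n}$ is \emph{false}, even restricted to the support of the indicator. On $\{\kappa=k,\,N_k(\Tau)\leq n\}$ you correctly note that increasing $X^*_{i,k}$ ``only adds, never removes'' terms, since $N_k(\Tau)$ is non-decreasing there; but the appended centered rewards $X^*_{j,k}-\mu_k$ can be arbitrarily negative, so $M^{(k)}_{\Tau_n}$ can strictly \emph{decrease} when $X^*_{i,k}$ increases. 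Concretely, under optimistic sampling with a nonadaptive horizon (a canonical instance of the monotonic decreasing property), raising $X^*_{i,k}$ makes arm $k$ get drawn more often, and if the newly included tabular entries lie far below $\mu_k$, the stopped martingale drops. Since the claim is false pointwise, no coupling or boundary case analysis can deliver it. Moreover, even if monotonicity held only on the support of the indicator, Harris--FKG would still not apply: the two-point exchange underlying that inequality compares configurations inside and outside the support, so both factors must be monotone on the whole column.

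The repair is to stop treating $M^{(k)}_{\Tau_n}$ as a single monotone function and instead decompose \emph{per sample}, which is exactly how the paper proceeds (the proposition is obtained there as the special case $C=\{\kappa=k\}$, $f=\mathrm{id}$ of \cref{thm::cond_bias}). Write $\phi\, M^{(k)}_\Tau=\sum_t \phi\,\mathbbm{1}(A_t=k)(Y_t-\mu_k)$ and split each term over $\{N_k(t)=i\}$: the indicator $\mathbbm{1}(A_t=k,\,N_k(t)=i)$ does not depend on $X^*_{i,k}$ (that reward is unrevealed when the decision is made), while $\phi$ is non-increasing and $X^*_{i,k}-\mu_k$ is increasing in that single coordinate; a symmetrization with an independent copy $X^{*\prime}_{i,k}$, conditional on everything else, then signs each summand. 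This is the paper's inequality \eqref{eq::main_lemma}, stated there with the bounded function $\mathbbm{1}(Y_t\leq y)$ in place of $Y_t-\mu_k$ precisely so that the partial sums are bounded by $2$ and dominated convergence yields the CDF ordering with no integrability headaches; the sample-mean statement (and any monotone $f$) is then recovered by stochastic dominance, with $\E[N_k(\Tau)]<\infty$ and Wald guaranteeing the needed integrability. Your Wald bookkeeping and the passage from conditional to marginal by summing over $k$ match the paper; the single-coordinate decomposition is the missing idea, and it replaces, rather than rescues, your joint-monotonicity step.
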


Note that if a MAB algorithm consists of an optimistic sampling but nonadaptive stopping and choosing rules, then the algorithm satisfies the monotonic decreasing property. This is the reason why $\epsilon$-greedy, UCB, and Thompson sampling algorithms yield a negatively biased sample mean for each arm, marginally. 

Also, if a MAB  algorithm consists of an optimistic stopping but nonadaptive sampling and choosing, the algorithm satisfies the monotonic increasing property, which explains why sequential testing procedures (corresponding to one-arm MBA experiments) with an upper stopping boundary produces a positively biased sample mean.

\subsection{The Sign of the Conditional Bias}

In this section, we generalize \cref{prop::marginal_bias} and derive sufficient conditions for the sign of the conditional bias under arbitrary conditioning events of monotone functions of the arms rewards.  


We first introduce some  notation. For each arm $k$, let $F_k$ denote the corresponding cumulative distribution function (CDF): $y \in \mathbb{R} \mapsto F_k(y) = \mathbb{P}( X \leq y),$ where $X \sim P_k$.
Let $\hat{F}_{k, t}$ be the empirical CDF for arm $k$ based on samples up to time $t$; that is, $\hat{F}_{k, t}$ is a random function on $\mathbb{R}$ and taking values in $[0,1]$ given by 
\begin{equation}
  y \in \mathbb{R} \mapsto   \hat{F}_{k,t}(y):= \frac{1}{N_k(t)}\sum_{s=1}^t \mathbbm{1}\left(A_s = k, Y_s \leq y \right),
\end{equation}
which returns the fractions of samples from arm $k$ whose values are no larger than $y$.
For a stopping time $\Tau$ with respect to the MAB filtration $\{\mathcal{F}_t\}$, we then define $\hat{F}_{k, \Tau}$ to be the  empirical CDF of the rewards of arm $k$ up to time $\Tau$; that is, for each $y \in \mathbb{R}$,
\[
\hat{F}_{k, \Tau}(y) := \lim_{t\to\infty}\hat{F}_{k, \Tau \wedge t}(y).
\]
The use of the limit in the above definition is a necessary technicality in order to allow for the possibility that $\Tau = \infty$.
Note that, on the event $\{ N_k(\Tau) = \infty \} = \{\lim_{t \to \infty} N_k(\Tau \wedge t) = \infty\}$, we have that, for each $y \in \mathbb{R}$, $\hat{F}_{k}(y) = F_k(y)$ almost surely, based on the strong law of large numbers; see  Theorem 2.1 in \citet{gut2009stopped}.

Next, for any  function $f: \mathbb{R} \to \mathbb{R}$ integrable with respect to $P_k$, let  $E_k f = \int f(x) \mathrm{d}P_k(x)$
be the corresponding expectation. Similarly, we let
\[
\hat{E}_{k,t} f =  \frac{1}{N_{k}(t)} \sum_{s=1}^t f(Y_s) \mathbbm{1}\left(A_s = k\right)  
\]
to be the expectation of $f$ under the empirical measure of the $k$-th arm at time $t$. Clearly,  $\hat{E}_{k,t} f$ is a random variable.  If $f$ is the identity function, then it is immediate to see that $E_k f = \mu_k$ and $\hat{E}_{k,t} f = \hat{\mu}_k(t)$. Also, for any $y \in \mathbb{R}$, setting $f$ to be the indicator function $x \mapsto f(x) = \mathbbm{1}(x \leq y )$ yields that $E_k f = F_k(y)$ and $\hat{E}_{k,t} f = \hat{F}_{k,t}(y)$. 
 Finally, for a (possibly infinite) stopping time $\Tau$ with respect to the filtration $\{ \mathcal{F}_t\}$, we set
 \[
 \hat{E}_{k,\Tau} f = \lim_{t \rightarrow \infty} \hat{E}_{k,  \Tau \wedge t} f.
 \]


We are now ready to present the main result of this paper. The proof is given in \cref{Appen::proof_of_cond_bias}. For any event $C$, we let $\mathbb{E}[ \cdot |C]$ denote the conditional expectation given $C$. 

\begin{theorem}
\label{thm::cond_bias}
   Let $\Tau$ be a stopping time with respect to the MAB experiment natural filtration $\{ \mathcal{F}_t\}$. For a fixed $k \in [K]$, suppose $N_k(\Tau) \geq 1$. Let $C \in \mathcal{F}_{\Tau}$ be any event at stopping time $\Tau$ with $\mathbb{P}(C) > 0$. Assume that, for each $i$, the function $\D^*_\infty \mapsto  \mathbbm{1}\left(C\right) / N_k(\Tau)$ is a decreasing function of $X_{i,k}^*$ while keeping all other entries in $\D^*_\infty$ fixed.  Then, for each $y \in \mathbb{R}$,
    \begin{equation} \label{eq::sign_of_cond_measure_pos}
        \inf_{y \in \mathbb{R}}\left(\E\left[\hat{F}_{k, \Tau}(y) \mid C \right] -  F_k (y)\right) \geq 0,
    \end{equation}
   or, equivalently, for any non-decreasing function $f:\mathbb{R} \to \mathbb{R}$ with $\E\left[\hat{E}_{k,\Tau} |f|\mid C\right] < \infty$ and $E_{k,\Tau} |f| < \infty$,
    \begin{equation} \label{eq::sign_of_conditional_bias_decreas}
        \E\left[\hat{E}_{k,\Tau} f  \mid C \right] \leq  E_k f.
    \end{equation}
    Similarly, if, for each $i$, the function $\D^*_\infty \mapsto  \mathbbm{1}\left(C\right) / N_k(\Tau)$ is an increasing function of $X_{i,k}^*$ while keeping all other entries in $\D^*_\infty$ fixed, then we have
    \begin{equation} \label{eq::sign_of_cond_measure_neg}
        \sup_{y \in \mathbb{R}}\left(\E\left[\hat{F}_{k,\Tau}(y) \mid C \right] -  F_k (y)\right) \leq 0,
    \end{equation} 
    or, equivalently,
    \begin{equation} \label{eq::sign_of_conditional_bias_increas}
          \E\left[\hat{E}_{k,\Tau} f  \mid C \right] \geq  E_k f,
    \end{equation}
    for any non-decreasing function $f:\mathbb{R}\to\mathbb{R}$ satisfying $\E\left[\hat{E}_{k,\Tau} |f|\mid C\right] < \infty$ and $E_{k,\Tau} |f| < \infty$.
\end{theorem}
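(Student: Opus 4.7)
The plan is to prove the expectation inequality \eqref{eq::sign_of_conditional_bias_decreas} directly; \eqref{eq::sign_of_cond_measure_pos} is then its specialization to $f(x) = \mathbbm{1}(x > y)$, and the reverse implication is the standard stochastic-dominance dictionary (any non-decreasing $f$ is a non-negative Lebesgue--Stieltjes integral of indicators $x\mapsto \mathbbm{1}(x > y)$, so Fubini converts the pointwise-in-$y$ bound into the expectation bound, with the integrability hypotheses $\E[\hat E_{k,\Tau}|f|\mid C] < \infty$ and $E_k|f| < \infty$ justifying the interchange). The substance is therefore to show, under the decreasing hypothesis on $g := \mathbbm{1}(C)/N_k(\Tau)$, that
\[
\E[\mathbbm{1}(C)\,\hat E_{k,\Tau} f] \,\leq\, \P(C)\, E_k f
\]
for every non-decreasing integrable $f$; the increasing case is identical with signs flipped.

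Using the tabular identity $\hat E_{k,\Tau} f = \tfrac{1}{N_k(\Tau)} \sum_{i=1}^{N_k(\Tau)} f(X^*_{i,k})$ and subtracting $\P(C)\,E_k f$ from both sides, the bound reduces to
\[
\sum_{i=1}^{\infty} \E\bigl[g\,\mathbbm{1}(i \leq N_k(\Tau))\,(f(X^*_{i,k}) - E_k f)\bigr] \,\leq\, 0,
\]
with the sum/expectation interchange justified by integrability. For each $i$ I split $\mathbbm{1}(i \leq N_k(\Tau)) = 1 - \mathbbm{1}(i > N_k(\Tau))$. The $\mathbbm{1}(i > N_k(\Tau))$ half vanishes by the defining feature of the tabular model: on the event $\{N_k(\Tau) < i\}$ the entry $X^*_{i,k}$ is never consulted by the algorithm, so $g\,\mathbbm{1}(i > N_k(\Tau))$ is $\sigma(\D^*_\infty \setminus \{X^*_{i,k}\})$-measurable and hence independent of $X^*_{i,k}$; combined with $\E[f(X^*_{i,k})] = E_k f$, this forces $\E[g\,\mathbbm{1}(i > N_k(\Tau))\,(f(X^*_{i,k}) - E_k f)] = 0$.

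For the remaining half, $\E[g\,(f(X^*_{i,k}) - E_k f)] = \mathrm{Cov}(g, f(X^*_{i,k}))$, I apply Chebyshev's covariance (Harris/FKG) inequality conditionally on $\D^*_\infty \setminus \{X^*_{i,k}\}$: given that information the only remaining randomness is $X^*_{i,k}$, in which $f(X^*_{i,k})$ is non-decreasing while $g$ is non-increasing by the theorem's hypothesis, so the conditional covariance is non-positive; and since $\E[f(X^*_{i,k}) \mid \D^*_\infty \setminus \{X^*_{i,k}\}] = E_k f$ is a constant, the outer term in the law of total covariance vanishes. Summing the two non-positive contributions over $i$ closes the argument.

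The main technical obstacle I anticipate is formalizing the claim that $g\,\mathbbm{1}(i > N_k(\Tau))$ is $\sigma(\D^*_\infty \setminus \{X^*_{i,k}\})$-measurable. The intuition is clear --- the MAB trajectory up to the potential $i$-th pull of arm $k$ uses only $\{X^*_{j,k}\}_{j < i}$, the entries from the other arms, and the external randomization $\{W_t\}$, so whether an $i$-th pull of arm $k$ occurs before $\Tau$ is decided without consulting $X^*_{i,k}$ --- but one must separately handle $\Tau = \infty$, which is precisely why $\hat F_{k,\Tau}$ and $\hat E_{k,\Tau}$ were defined via limits of the truncated stopping times $\Tau \wedge t$. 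A monotone/dominated convergence argument then lets me establish the inequality at each finite horizon $\Tau\wedge t$ first and pass to the limit, with the residual contribution from $\{N_k(\Tau) = \infty\}$ handled by the strong-law observation recorded just after the definition of $\hat F_{k,\Tau}$ in the excerpt.
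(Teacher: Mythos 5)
Your proposal is correct, and its core mechanism coincides with the paper's: work in the tabular model, reduce to per-entry terms in which the adaptive part of the weight does not depend on $X^*_{i,k}$, apply a monotone-association (Chebyshev/FKG) covariance bound, handle $\{N_k(\Tau)=\infty\}$ by the stopped strong law, and pass between the CDF statement and general monotone $f$ via the standard stochastic-dominance dictionary. The organization, however, differs in three genuine respects. (i) You sum over the stack index $i$ with the split $\mathbbm{1}(i\le N_k(\Tau)) = 1-\mathbbm{1}(i>N_k(\Tau))$, killing the second piece because on $\{N_k(\Tau)<i\}$ the run never consults $X^*_{i,k}$; the paper instead sums over time $s$ and decomposes on $\{A_s=k,\,N_k(s)=i\}$, whose indicator is $X^*_{i,k}$-free by construction, so it never needs your measurability lemma for $\{i>N_k(\Tau)\}$. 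That lemma is nonetheless correct, by the usual two-case argument: if $N_k(\Tau)<i$ the run is unchanged when $X^*_{i,k}$ is modified, and the modified run cannot have $N'_k(\Tau')<i$ while the original has $N_k(\Tau)\ge i$ (note this also shows $\mathbbm{1}(i\le N_k(\Tau))$ itself is $X^*_{i,k}$-invariant, so you could even skip the split). (ii) You invoke conditional Chebyshev plus the law of total covariance, with the inner conditional mean $E_k f$ constant by independence; the paper proves the same covariance sign bare-handed by symmetrizing with an independent copy $X^{*\prime}_{i,k}$ --- the two are the same inequality, with the symmetrization hidden inside your citation. (iii) You prove the expectation inequality \eqref{eq::sign_of_conditional_bias_decreas} directly for integrable monotone $f$ and recover \eqref{eq::sign_of_cond_measure_pos} by specializing to $f=\mathbbm{1}(x>y)$, leaning on the theorem's integrability hypotheses to justify the Fubini/absolute-summability interchange; the paper proves the indicator case first, where every variable is bounded (its partial sums satisfy $|L_t|\le 2$) so dominated convergence suffices with no summability bookkeeping, and only afterwards transfers to general $f$ at the level of expectations. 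Your route buys a one-pass proof of the general statement and makes the ``unconsulted entries are independent'' fact explicit; the paper's route buys lighter limit arguments by confining all truncation and convergence issues to bounded indicators. One small point to write out in a full version: after splitting off $\{N_k(\Tau)=\infty\}$ (where $\hat{E}_{k,\Tau}f = E_k f$ almost surely), the covariance step must be applied to $\mathbbm{1}(C)\mathbbm{1}(N_k(\Tau)<\infty)/N_k(\Tau)$, which remains monotone in $X^*_{i,k}$ because it agrees pointwise with $\mathbbm{1}(C)/N_k(\Tau)$ under the convention $1/\infty=0$; the paper records exactly this remark at the start of its proof.
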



The important conclusion from the above theorem is that the conditional expected value of the empirical  CDF of the rewards of any given arm can be stochastically smaller or greater than the true CDF of the corresponding arm.
And furthermore, the sign of the bias can be determined in the basis of natural and often verifiable (as shown below) monotonicity conditions that depend on (i) the specific sampling and stopping rules deployed in the MAB experiment and (ii) the choice of the conditioning event $C$. 

 Note that if in the theorem  we choose $C$ as the conditioning event $\{\kappa = k\}$ and the function $f$ as the identity function, \cref{thm::cond_bias} yields  \cref{prop::marginal_bias} of \citet{shin2019bias} as a special case (indeed the statements about the marginal bias in that result follows from those on the conditional bias given the conditioning event $\{ \kappa = k \}$).
We emphasize that \cref{thm::cond_bias} is a generalization of \cref{prop::marginal_bias} of \citet{shin2019bias} in at least two ways: (i) it provides a more general guarantee about the conditional bias of monotone functions of the empirical CDF of the arms as opposed to just the sample mean and (ii) it allows for virtually any conditioning event that depends on the outcome of the MAB experiment (formally, that is measurable with respect to $\mathcal{F}_{\Tau})$. 

The monotonicity assumption in \cref{thm::cond_bias} captures in a mathematically concise yet intuitive manner how adaptivity in the data collection process combines with adaptivity in the selective data analysis, exemplified by the conditioning event, to affect the sign of the bias. The interaction between these two sources of adaptivity is, at least to us, not at all apparent and, in fact, rather subtle. To illustrate this phenomenon, in the next section, we apply \cref{thm::cond_bias} in several, fairly routine,  situations in adaptive data analysis to demonstrates how marginal and conditional biases may very well have opposite signs. 

\section{Applications}\label{sec::simulations}
In this section, we discuss several practical examples of the conditional bias results in \cref{thm::cond_bias}.


\subsection{Conditional Versus Marginal Bias of a Stopped Sequential Test} \label{subSec::eg1}

Suppose we have a stream of samples from a single arm with a finite mean $\mu$. For each $t$, let $\hat{\mu}(t)$ be the sample mean of the rewards observed up to time $t$.

To test whether $\mu$ is larger than a reference value $\mu_0$, we may construct an upper boundary $t \mapsto U(t)$ and conclude that $\mu \geq \mu_0$ if the sample mean ever crosses the upper boundary. 

Let $\Tau$ be the first time the sample mean crosses the boundary, i.e.  $\Tau := \inf\left\{t \geq 1: \hat{\mu}(t) \geq U(t) \right\}$. The stopping time $\Tau$ is an example of an optimistic stopping and, from \cref{prop::marginal_bias}, we can check that the stopped sample mean $\hat{\mu}(\Tau)$ is always positively biased \citep{shin2019bias}.

However, the stopping time $\Tau$ can be large or even infinite with non-zero probability. Thus, in practice, we may want to allow for the possibility of stopping the sequential test before reaching the stopping time. Let $M$ be any fixed predefined time at which we stop the testing procedure (if still ongoing), and let $\Tau_M := \min\left\{\Tau, M \right\}$ be the corresponding stopping time which takes account the early stopping option. Again, by \cref{prop::marginal_bias}, we know that the stopped sample mean $\hat{\mu}(\Tau_M)$ is still positively biased, i.e. $\mathbb{E}\left[\hat{\mu}(\Tau_M)\right] \geq \mu,$
since the function $\D_{\infty}^* \mapsto 1 / \Tau_M$ is an increasing function of $X_i^*$ while keeping all other entries in $\D_{\infty}^*$ fixed. From \cref{thm::cond_bias}, we can also check that the expected empirical CDF at stopping time $\Tau_M$ is negatively biased: 
\begin{equation} \label{eq::eg1_marginal}
   \sup_{y \in \mathbb{R}}\left( \E\left[\hat{F}_{k, \Tau_M} (y) \right] - F_k(y) \right) \leq 0. 
\end{equation}
Conditioned on the early stopping event $\{M < \Tau\}$, however, \cref{thm::cond_bias} shows that the early stopped sample mean and the empirical CDF are negatively and positively biased, respectively, since the function $\D_{\infty}^* \mapsto \frac{\mathbbm{1}(M < \Tau)}{\Tau_M} = \frac{\mathbbm{1}(M < \Tau)}{M}$ is an decreasing function of $X_i^*$ while keeping all other entries in $\D_{\infty}^*$ fixed. That is,
\begin{align}
\mathbb{E}\left[\hat{\mu}(M) \mid M < \Tau \right] &\leq \mu, \\
\inf_{y \in \mathbb{R}}\left( \E\left[\hat{F}_{k, M} (y)\mid M < \Tau \right] - F_k(y) \right)& \geq 0.
\end{align}

However, we can also check that the function $\D_{\infty}^* \mapsto \frac{\mathbbm{1}(M \geq \Tau)}{\Tau_M} = \frac{\mathbbm{1}(M \geq \Tau)}{\Tau}$ is an increasing function of $X_i^*$, which implies that, on the line-crossing event $\left\{M \geq \Tau\right\}$, the sample mean and empirical CDF are positively and negatively biased, respectively. Thus, depending on the conditioning event, the conditional bias can be positive or negative. Also note that, without the early stopping condition, $\hat{\mu}(M)$ is an unbiased estimator of $\mu$.

\begin{figure}
    \begin{center}
    \includegraphics[scale =  0.65]{./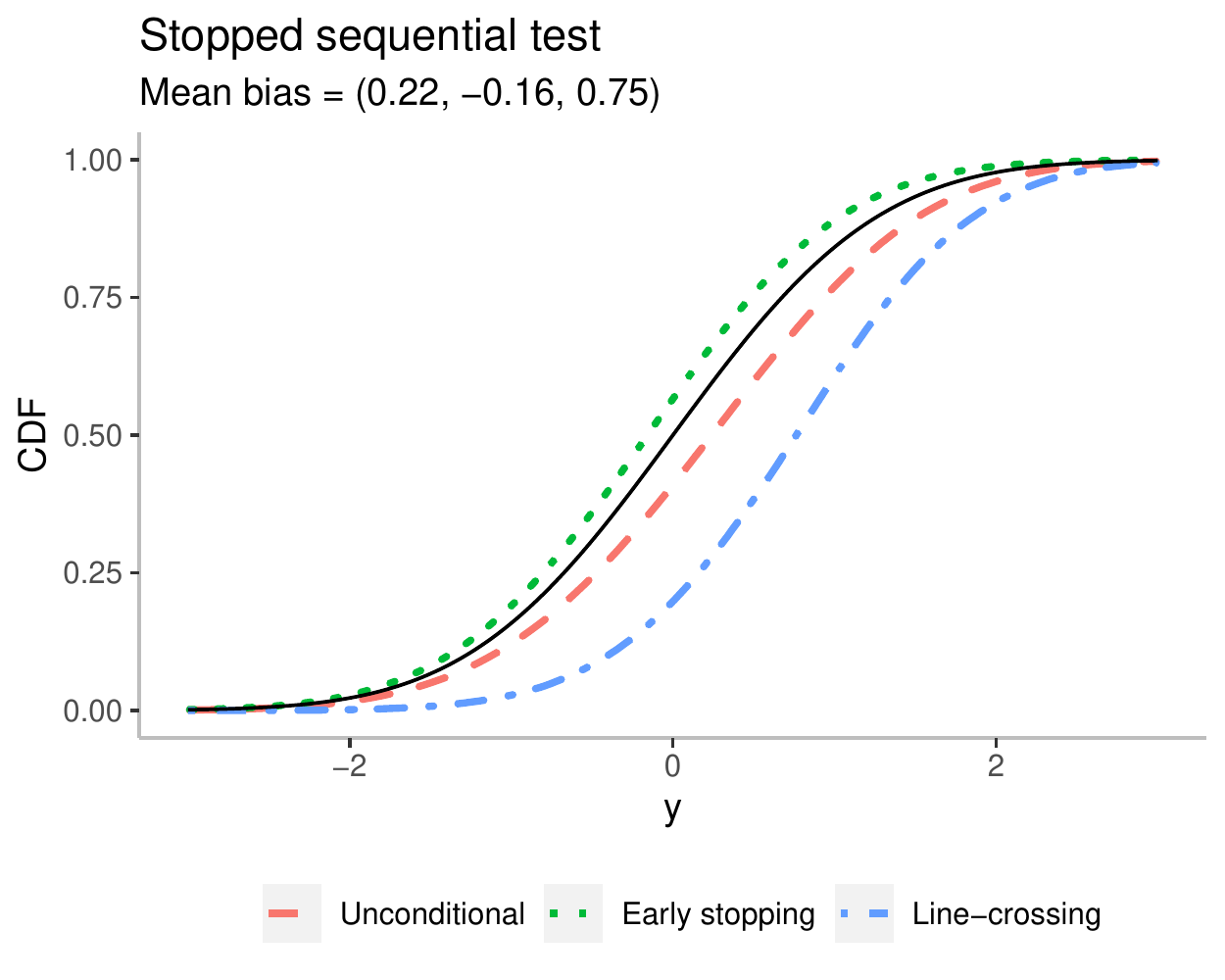}
    \end{center}
    \caption{\em  Average of marginal (red dashed line) and conditional empirical CDFs (green dotted and blue dot-dashed lines) from repeated stopped sequential test in \cref{subSec::eg1}.  The black solid line refers to the true CDF of the underlying arm}
    \label{fig::example1}
\end{figure}

{\bf Experiment:} We verify these facts with simulations, where we repeat a stopped sequential test $10^5$ times. In each test, the arm corresponds to a standard normal distribution, and the upper boundary is constructed by the point-wise upper confidence bounds  $t \mapsto U(t) = \frac{z_\alpha}{\sqrt{t}}$, where $z_\alpha$ is the $\alpha$-upper quantile of the standard normal distribution. Note that this upper boundary does not yield a valid testing procedure since it inflates the type 1 error. However, we choose this boundary with unusual parameters $\alpha = 0.2$ and $M = 10$ to manifest the difference between marginal and conditional biases. 

\cref{fig::example1} shows the point-wise averages of the observed marginal and conditional CDFs from the simulated $10^5$ stopped sequential tests. The black solid line refers to the true CDF of the underlying arm. The red dashed line corresponds to the marginal CDF, which lies below the true CDF, as expected, and thus the corresponding mean bias is positive. The green dotted and blue dot-dashed lines correspond to the averages of empirical CDFs conditioned on the early stopping and line-crossing events, respectively. As predicted, conditioned on the early stopping event, the empirical CDF and the sample mean are positively and negatively biased, respectively. In contrast, conditioned on the line-crossing event, the empirical CDF and the sample mean are negatively and positively biased, respectively.





\subsection{Sequential Test for Two Arms: Conditional Biases from Upper and Lower Stopping Boundaries} \label{subSec::eg2}
Suppose we have two independent arms with unknown means $\mu_1$ and $\mu_2$. In this subsection, we consider the following testing problem:
\begin{equation}
H_0 : \mu_1 \leq \mu_2 ~~\text{vs}~~H_1: \mu_1 >\mu_2.
\end{equation}
To test this hypothesis, we draw a sample from arm $1$ for every odd time and from arm $2$ for every even time. Then, at each even time $t$, we check whether the difference between the sample means $\hat{\mu}_1(t)$, $\hat{\mu}_2(t)$ from the two arms crosses predefined upper and lower stopping boundaries, $t \mapsto U(t)$ and $t \mapsto  L(t)$. 

To be specific, define stopping times $\Tau^U$ and $\Tau^L$ as follows: 
\begin{align}
\Tau^U&:=\inf\left\{t \in \mathbb{N}_{\mathrm{even}} : \hat{\mu}_1(t) - \hat{\mu}_2(t) \geq U(t)  \right\}, \\
\Tau^L&:=\inf\left\{t \in \mathbb{N}_{\mathrm{even}} : \hat{\mu}_1(t) - \hat{\mu}_2(t) \leq L(t)  \right\}.
\end{align}
Let $M > 0$ be a predetermined maximum time budget. Based on $\Tau^U, \Tau^L$ and $M$, we stop sampling whenever $\hat{\mu}_1(t) - \hat{\mu}_2(t) $ crosses one of the boundaries or the maximum time budget $M$ is met. Define the corresponding stopping time as $\Tau_M := \min\left\{\Tau^U, \Tau^L, M\right\}$.
At time $\Tau_M$, we accept $H_1$ if $\Tau_M = \Tau^U$ (upper-crossing event), and accept $H_0$ if $\Tau_M = \Tau^L$  (lower-crossing event). Otherwise, we declare that we do not have enough evidence to accept either one of two hypotheses.  


In this case, we cannot apply  \cref{prop::marginal_bias}  to determine the sign of the marginal bias since the stopping rule is neither optimistic nor pessimistic, marginally. However, we can determine the sign of the conditional bias based on \cref{thm::cond_bias} since the the function $\D_{\infty}^* \mapsto \frac{\mathbbm{1}(\Tau^U  \leq \min\left\{\Tau^L , M \right\})}{\Tau^U}$ is an increasing (resp. decreasing)  function of $X_{i,1}^*$ (resp. $X_{i,2}^*$) for each $i$, keeping all other entries in $\D_{\infty}^*$ fixed.

In detail, conditioned on the event of accepting $H_0$, the sample mean and empirical
 CDF for arm $1$ are  negatively and positively biased, respectively. That is,
 
 \begin{align}
&\mathbb{E}\left[\hat{\mu}_1(\Tau_M) \mid \Tau_M = \Tau^L\right] \nonumber\\
&= \mathbb{E}\left[\hat{\mu}_1(\Tau^L) \mid \Tau^L \leq \min\left\{\Tau^U , M \right\}\right]  \leq \mu_1, \label{eq::eg2_naive_H0_mean}\\ 
&\inf_{y \in \mathbb{R}}\left( \E\left[\hat{F}_{k, \Tau_M} (y)\mid \Tau_M = \Tau^L  \right] - F_k(y) \right) \nonumber\\
&= \inf_{y \in \mathbb{R}}\left( \E\left[\hat{F}_{k, \Tau^L} (y)\mid \Tau^L \leq \min\left\{\Tau^U , M \right\} \right] - F_k(y) \right) \nonumber\\
& \geq 0.\label{eq::eg2_naive_H0}
\end{align}

Similarly, for arm 2, we have opposite signs of the sample mean and empirical CDF. 

By the same reasoning, the signs of the conditional biases conditioned on the event of accepting $H_1$ are reversed: 

\begin{align}
&\mathbb{E}\left[\hat{\mu}_1(\Tau_M) \mid \Tau_M = \Tau^U\right] \nonumber\\
&= \mathbb{E}\left[\hat{\mu}_1(\Tau^U) \mid \Tau^U \leq \min\left\{\Tau^L , M \right\}\right]  \geq \mu_1, \label{eq::eg2_naive_H1_mean} \\ 
&\sup_{y \in \mathbb{R}}\left( \E\left[\hat{F}_{k, \Tau_M} (y)\mid \Tau_M = \Tau^U  \right] - F_k(y) \right) \nonumber\\
&= \sup_{y \in \mathbb{R}}\left( \E\left[\hat{F}_{k, \Tau^U} (y)\mid \Tau^U \leq \min\left\{\Tau^L , M \right\} \right] - F_k(y) \right) \nonumber\\
& \leq 0. \label{eq::eg2_naive_H1}
\end{align}

Thus, we conclude that, in all cases, the expected difference between the sample means are exaggerated toward ``the direction of decision''.

\begin{remark}
    These results hold regardless of whether the underlying distribution is under the null or alternative.
\end{remark}


\begin{figure}
    \begin{center}
    \includegraphics[scale =  0.65]{./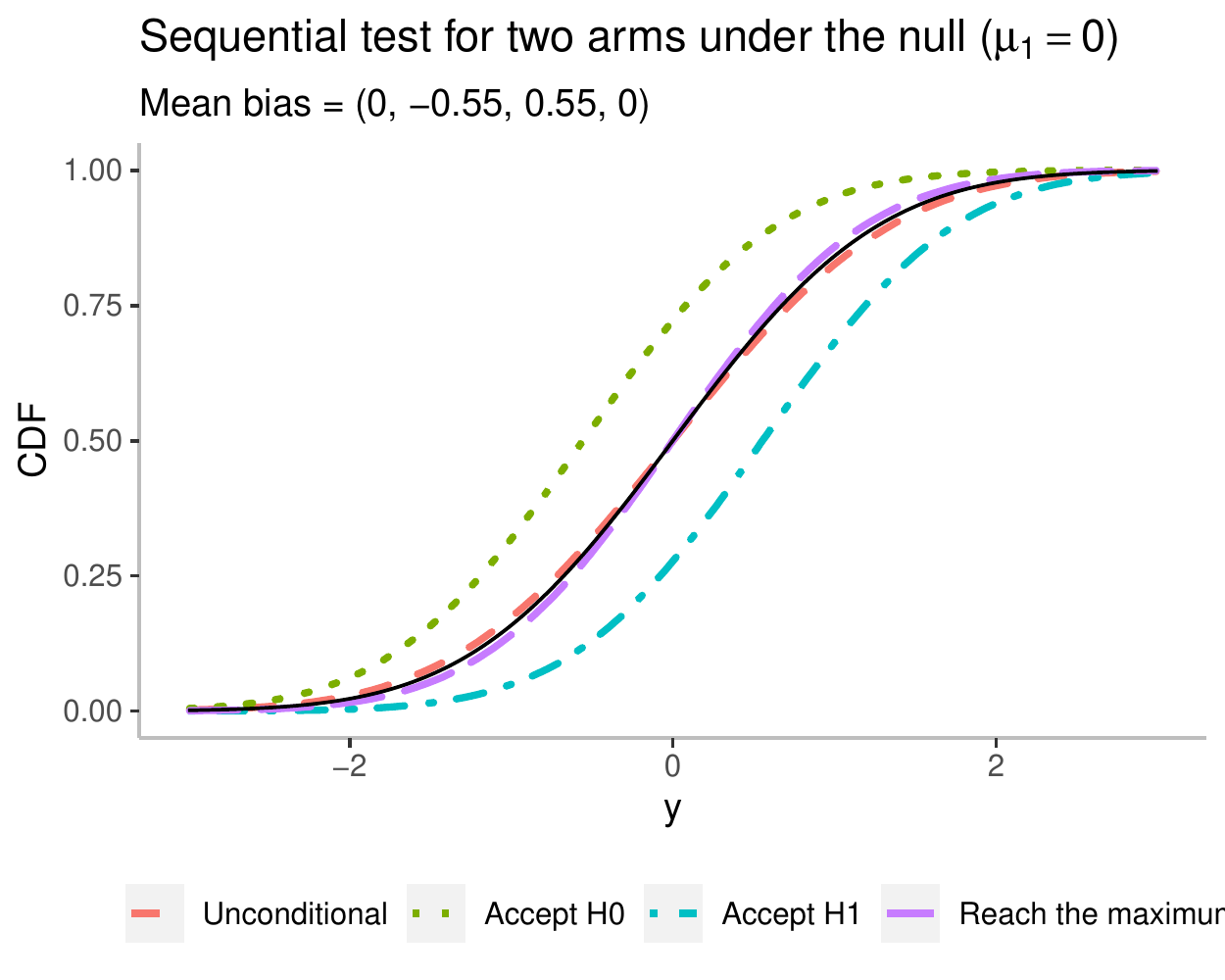}
    \end{center}
    \caption{\em  Average of marginal and conditional empirical CDFs of arm 1 from repeated sequential tests for two arms in \cref{subSec::eg2}. The black solid line refers to the true CDF of the underlying arm. }
    \label{fig::example2}
\end{figure}

{\bf Experiment:} To demonstrate the conditional bias result, we set two standard normal arms with same means $\mu_1 = \mu_2  = 0$. In this experiment, we use  upper and lower stopping boundaries based on naive point-wise confidence intervals:
\begin{equation}
    U(t) := z_{\alpha /2}\sqrt{\frac{2}{t}}, \quad \text{and} \quad L(t) = - U(t),
\end{equation}
where $\alpha$ is set to $0.2$ to show the bias better. \cref{fig::example2} show the marginal and conditional biases of the empirical CDFs and sample means for arm $1$ based on $10^5$ repetitions of the experiment. The black solid line corresponds to the true underlying CDF. The red dashed line refers to the average of the marginal CDFs, and the purple long-dashed line corresponds to the average of the empirical CDFs conditioned on reaching the maximal time. Note that for these two cases, the empirical CDFs are neither positively nor negatively biased across all $y \in \mathbb{R}$.

However, for the cases corresponding to accepting $H_0$ (green dotted line) and accepting $H_1$ (blue dot-dashed line), the empirical CDFs are positively and negatively biased, respectively: see inequalities~\eqref{eq::eg2_naive_H0}~and~ \eqref{eq::eg2_naive_H1}. The  conditional bias of the sample mean is also negative conditioning on the event of accepting $H_0$  and positive conditioning on the event of accepting  $H_1$: see inequalities~\eqref{eq::eg2_naive_H0_mean}~and~ \eqref{eq::eg2_naive_H1_mean}.


\subsection{Best-Arm Identification: (i) lil’UCB Algorithm} \label{subSec::best_arm}

Suppose we have $K > 2$  sub-Gaussian arms with a common and known parameter $\sigma^2$. In many applications, we may want to identify which of the $K$ arms has the largest mean parameter by using as few samples as possible.

In the previous subsection, we observed that, in the two-armed bandit setting, if we use a boundary-crossing compatible with a best-arm identification style algorithm with deterministic sampling, then the optimal arm has positive bias and the sub-optimal arm has a negative bias.

\citet{shin2019bias} showed that the same phenomenon happens for the best arm as chosen  by the lil'UCB algorithm, which consists of the following sampling, stopping and choosing rules \citep{jamieson_lil_2014}:
\begin{itemize}
    \item Sampling: For $t =1,\dots K$, draw a sample from each arm. For $t > K$, draw a sample from arm $k$ if 
    \[
     k = \argmax_{j \in [K]} \hat{\mu}_j(t-1) + u^{\text{lil}}\left(N_j(t-1)\right),
    \]
    where $\beta, \epsilon, \lambda > 0$ and $\delta >0$ are algorithm parameters and  
    \begin{align*}
        u^{\text{lil}}\left(n\right):= &(1+\beta)(1+\sqrt{\epsilon})\\
        &\times \sqrt{2\sigma^2(1+\epsilon)\log\left(\log((1+\epsilon)n) /\delta\right)/n}.        
    \end{align*}
    \item Stopping: the stopping time $\Tau$ is defined as the first time at which the following inequality holds for some $k \in [K]$:
    \[
    N_k(t) \geq 1 + \lambda\sum_{j\neq k}N_j(t),
    \]
    where $\lambda>0$ is an algorithm parameter.
    \item Choosing: $\kappa = \argmax_{k \in [K]} N_k(\Tau)$.
\end{itemize}

\begin{figure}
    \begin{center}
    \includegraphics[scale =  0.65]{./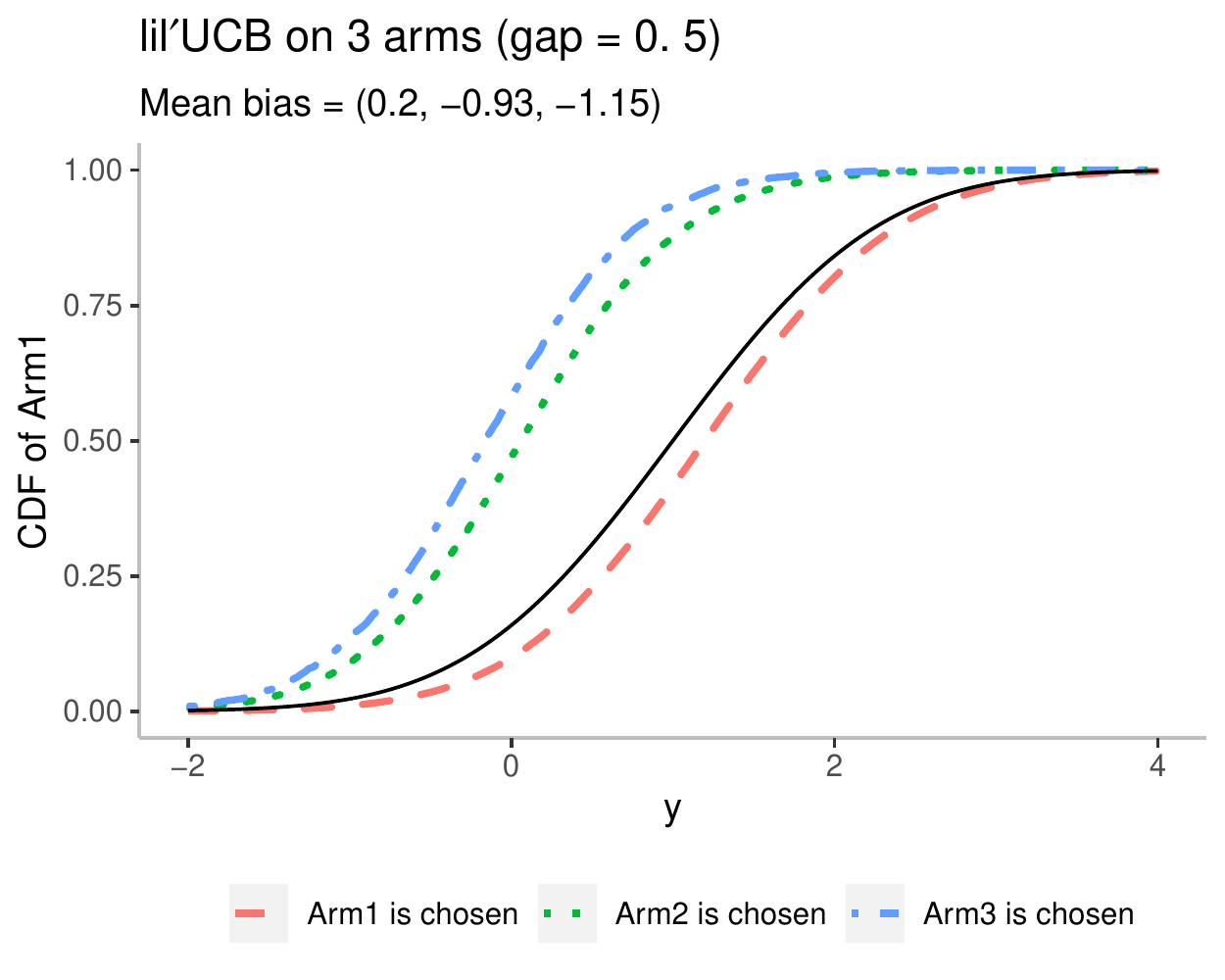}
    \end{center}
    \caption{\em  Average of conditional empirical CDFs of arm 1 from  $10^5$ lil'UCB algorithm runs on three unit-variance normal arms with $\mu_1= 1, \mu_2 = 0.5$ and $\mu_3 = 0$, as described in \cref{subSec::best_arm}. The black solid line refers to the true CDF of arm 1.}
    \label{fig::example4}
\end{figure} 
\begin{proposition}[Corollary~8 in \citet{shin2019bias}]\label{prop::lilUCB}
In the settings of the lil'UCB algorithm, for each $k$ with $\mathbb{P}(\kappa = k) >0$, we have that
\begin{equation}
    \mathbb{E}\left[ \hat{\mu}_k(\Tau) \mid \kappa = k \right]\geq \mu_k.
\end{equation}
\end{proposition}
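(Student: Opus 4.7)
The plan is to apply Proposition~\ref{prop::marginal_bias} (equivalently, Theorem~\ref{thm::cond_bias} with conditioning event $C = \{\kappa = k\}$ and $f$ the identity function), which reduces the task to verifying that the lil'UCB algorithm satisfies the monotonic increasing property for arm $k$ in the sense of Definition~\ref{def::monotone_sampling_strategy}. That is, in the tabular model, the map $\D^*_\infty \mapsto \mathbbm{1}(\kappa = k)/N_k(\Tau)$ must be shown non-decreasing in each coordinate $X^*_{i,k}$, holding the remaining entries of $\D^*_\infty$ fixed.

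I would check each of the three components of lil'UCB separately. For sampling, the UCB index $\hat\mu_j(t-1) + u^{\text{lil}}(N_j(t-1))$ for arm $k$ is monotonically non-decreasing in every observed $Y$-value of arm $k$; a standard inductive coupling on $t$ then shows the sampling rule is optimistic for arm $k$, so $N_k(t)$ is pointwise non-decreasing in $X^*_{i,k}$ for every $t$. For stopping, the rule fires at the first $t$ with $N_{k'}(t) \geq 1 + \lambda\sum_{j\neq k'} N_j(t)$ for some $k'$; since $\sum_j N_j(t) = t$, an increase in $N_k(t)$ forces a decrease in $\sum_{j\neq k} N_j(t)$, making arm $k$'s stopping criterion easier to satisfy and yielding that $\Tau$ is non-increasing in $X^*_{i,k}$. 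Finally, since $\kappa = \argmax_j N_j(\Tau)$ and $N_k$ grows no slower than every other count, the choosing rule is optimistic in the sense that $\mathbbm{1}(\kappa = k)$ is non-decreasing in $X^*_{i,k}$.

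To combine these into the required monotonicity of the quotient $\mathbbm{1}(\kappa = k)/N_k(\Tau)$, I would split into two cases. If $\kappa \neq k$ on the original tabular path, the quotient equals zero and can only weakly increase after raising $X^*_{i,k}$. If $\kappa = k$ originally, then $\kappa$ remains $k$ after raising $X^*_{i,k}$ by choosing optimism, so the numerator stays at $1$ and it suffices to argue that $N_k(\Tau)$ is non-increasing. This follows from the algebraic form of the lil'UCB stopping rule itself: at triggering, $N_k(\Tau)(1+\lambda) \geq 1 + \lambda \Tau$, and triggering at the smallest such $t$ pins $N_k(\Tau)$ to a non-decreasing function of $\Tau$; combined with $\Tau$ non-increasing in $X^*_{i,k}$, this gives $N_k(\Tau)$ non-increasing, as required.

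The main technical hurdle is the inductive coupling argument behind sampling optimism. When $X^*_{i,k}$ is increased, in the new path arm $k$ may be drawn at a time where another arm $j$ was drawn in the old path, so the tabular indices consumed by arm $j$ shift and the trajectories of $\hat\mu_j$ and $N_j$ for $j\neq k$ need not coincide between the two paths. The delicate step is to argue inductively that these shifts never allow another arm's UCB index to overtake arm $k$'s, nor allow another arm's stopping criterion to fire before arm $k$'s, at any $t\leq \Tau$. This careful bookkeeping is precisely the UCB-style monotonicity argument in the tabular framework developed in \citet{shin2019bias}, which underlies their proof of Corollary~8 --- the very result we are re-deriving here through the more general conditional bias machinery of Theorem~\ref{thm::cond_bias}.
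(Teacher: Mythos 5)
You should first note that the paper does not prove \cref{prop::lilUCB} at all: it is imported verbatim as Corollary~8 of \citet{shin2019bias}, and the closest in-paper argument is the proof of \cref{cor::lilUCB} in \cref{Appen::proof_of_lilUCB}, which handles the mirror event $\{\kappa \neq k\}$ by the same machinery. Your overall frame matches that treatment: reduce to the monotonic increasing property of $\D^*_\infty \mapsto \mathbbm{1}(\kappa = k)/N_k(\Tau)$ and invoke \cref{prop::marginal_bias} (or \cref{thm::cond_bias} with $C = \{\kappa = k\}$). Moreover, your ``pinning'' observation is genuinely the right tool for the $\kappa = k$ case: when arm $k$ triggers the stop at $\Tau$, one has $(1+\lambda\Tau)/(1+\lambda) \leq N_k(\Tau) < (2+\lambda\Tau)/(1+\lambda)$ (the upper bound because arm $k$'s criterion failed at $\Tau - 1$ and counts increase by at most one per step), an interval of length $1/(1+\lambda) < 1$, so $N_k(\Tau)$ is the unique integer there and is a non-decreasing function of $\Tau$ alone. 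This is needed precisely because the pointwise dominance $N_k(t) \geq N_k'(t)$ from sampling optimism runs the \emph{wrong} way for bounding $N_k(\Tau)$ from above in this case.

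However, two steps are broken or missing as written. First, your stopping bullet claims lil'UCB stopping is optimistic for arm $k$, i.e., $\Tau$ is non-increasing in $X_{i,k}^*$ unconditionally. That is false: the stop can be triggered by another arm $j$, and raising $X_{i,k}^*$ makes arm $j$'s criterion $N_j(t) \geq (1+\lambda t)/(1+\lambda)$ \emph{harder} to meet (its count grows more slowly), so $\Tau$ can increase. Monotonicity of $\Tau$ holds only on the relevant event: if $\kappa = k$ on the lower path, arm $k$'s criterion at its trigger time transfers to the raised path (where $N_k$ is pointwise larger), giving an earlier stop there --- this event-restricted derivation is exactly how the appendix argues, never via unconditional stopping optimism. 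Second, your choosing step --- $\mathbbm{1}(\kappa = k)$ non-decreasing ``since $N_k$ grows no slower than every other count'' --- is a non sequitur: per-$t$ dominance of $N_k$ says nothing about the argmax at the \emph{changed} stopping time, since the raised path stops earlier and could a priori be stopped by some $j \neq k$, in which case the quotient drops from $1/N_k(\Tau)$ to $0$ and monotonicity fails. Closing this requires the cross-arm anti-monotonicity lemma (Fact~2 in \cref{Appen::proof_of_lilUCB}, Lemma~9 of \citet{shin2019bias}): raising $X_{i,k}^*$ weakly decreases $N_j(t)$ for every $j \neq k$, whence no arm $j \neq k$ can fire its criterion on the raised path before the original $\Tau$, so the (weakly earlier) trigger must be arm $k$ itself and $\kappa$ stays at $k$. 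You never state this lemma, and your closing paragraph instead defers the bookkeeping to the proof of Corollary~8 of \citet{shin2019bias} --- the very statement being re-derived, which makes the argument circular as a standalone proof. (A shared, implicit caveat: both you and the paper identify the triggering arm with the argmax chooser, which is valid when $(1+\lambda t)/(1+\lambda) > t/2$, e.g., for $\lambda \geq 1$.)
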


The following corollary to \cref{thm::cond_bias} shows a complementary result---the sample mean and the empirical CDF of a given arm are negatively and positively biased, respectively, conditioned on the event that the arm is not selected as the best arm. In contrast, on the same conditioning event, the empirical distribution of the selected arm is negatively biased. 
\begin{corollary} \label{cor::lilUCB}
    In the settings of the lil'UCB algorithm, for each $k$ with $\mathbb{P}(\kappa \neq k) >0$, we have that 
\begin{align}
    \mathbb{E}\left[ \hat{\mu}_k(\Tau) \mid \kappa \neq k \right]&\leq \mu_k,  \\
\inf_{y \in \mathbb{R}}\left( \E\left[\hat{F}_{k, \Tau} (y)\mid \kappa \neq k \right] - F_k(y) \right)& \geq 0. 
\end{align}
Also, for each $k$ with $\mathbb{P}(\kappa = k) >0$, 
\begin{align}
    \mathbb{E}\left[ \hat{\mu}_k(\Tau) \mid \kappa = k \right]&\geq \mu_k, \\
\sup_{y \in \mathbb{R}}\left( \E\left[\hat{F}_{k, \Tau} (y)\mid \kappa = k \right] - F_k(y) \right)& \leq 0. 
\end{align}
\end{corollary}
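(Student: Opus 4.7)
The plan is to derive \cref{cor::lilUCB} as a direct application of \cref{thm::cond_bias}, with the two conditioning events $C = \{\kappa = k\}$ and $C = \{\kappa \neq k\}$, respectively. The requirement $N_k(\Tau) \geq 1$ holds automatically because lil'UCB draws from each arm once before entering its main loop; the integrability assumptions of \cref{thm::cond_bias} are trivial for the CDF statement (where we test against the bounded function $\mathbbm{1}(\cdot \leq y)$) and follow for the sample mean from the sub-Gaussian assumption on the arms. Thus the whole content of the corollary reduces to checking the monotonicity hypothesis of \cref{thm::cond_bias} for each choice of $C$.

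For $C = \{\kappa = k\}$, the required monotonic increasing property of $\D_\infty^* \mapsto \mathbbm{1}(\kappa = k)/N_k(\Tau)$ in each coordinate $X^*_{i,k}$ is precisely what is established for lil'UCB in \citet{shin2019bias} and reproduced here as \cref{prop::lilUCB}. Invoking the ``increasing'' branch of \cref{thm::cond_bias} with this $C$ simultaneously yields the upper bound on $\E[\hat{F}_{k,\Tau}(y) \mid \kappa = k]$ and, by specializing $f$ to the identity, the positive bias of the sample mean already contained in \cref{prop::lilUCB}.

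For $C = \{\kappa \neq k\}$, I would decompose
\begin{equation*}
\frac{\mathbbm{1}(\kappa \neq k)}{N_k(\Tau)} \;=\; \frac{1}{N_k(\Tau)} \;-\; \frac{\mathbbm{1}(\kappa = k)}{N_k(\Tau)},
\end{equation*}
and verify coordinate-wise monotonicity in $X^*_{i,k}$: the subtracted term is non-decreasing by the increasing property just cited, whereas the leading term $1/N_k(\Tau)$ is non-increasing because the lil'UCB sampling rule is optimistic for each arm (its UCB bonus depends only on counts), which forces $N_k(\Tau)$ to be non-decreasing in $X^*_{i,k}$. A non-increasing function minus a non-decreasing function is non-increasing, so $\mathbbm{1}(\kappa \neq k)/N_k(\Tau)$ satisfies the ``decreasing'' hypothesis of \cref{thm::cond_bias}. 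The corresponding branch of the theorem then produces both the negative-bias bound on $\E[\hat{\mu}_k(\Tau) \mid \kappa \neq k]$ and the accompanying lower bound on $\E[\hat{F}_{k,\Tau}(y) \mid \kappa \neq k]$.

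The only delicate point is the claim that $N_k(\Tau)$ is monotone in $X^*_{i,k}$: although $\Tau$ itself depends on the rewards, the lil'UCB stopping criterion $N_k(t) \geq 1 + \lambda \sum_{j \neq k} N_j(t)$ is a measurable function of the count sequence alone, so a pathwise induction in $t$ (identical to the one already used in \citet{shin2019bias} to derive the monotonic increasing property) shows that raising a single reward from arm $k$ cannot decrease any future $N_k(t)$, and hence cannot decrease $N_k(\Tau)$. With that observation in hand, the rest of the argument is routine bookkeeping.
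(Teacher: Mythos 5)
Your reduction to \cref{thm::cond_bias}, the treatment of $C = \{\kappa = k\}$ via \cref{prop::lilUCB}, and the decomposition $\mathbbm{1}(\kappa \neq k)/N_k(\Tau) = 1/N_k(\Tau) - \mathbbm{1}(\kappa = k)/N_k(\Tau)$ are all sensible; in fact the non-decreasing second term alone already rules out the case ($\kappa \neq k$ at the larger reward, $\kappa' = k$ at the smaller), so your scheme would correctly dispense with the paper's separate contradiction argument for $\kappa \neq k \Rightarrow \kappa' \neq k$. The genuine gap is your claim that $N_k(\Tau)$ is non-decreasing in $X_{i,k}^*$, hence $1/N_k(\Tau)$ non-increasing. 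Optimistic sampling gives $N_k(t) \geq N_k'(t)$ only at each \emph{fixed} $t$ (writing unprimed for the larger-reward scenario, as in \cref{Appen::proof_of_lilUCB}); the inference ``hence $N_k(\Tau) \geq N_k'(\Tau')$'' is a non-sequitur because the stopping time itself moves with the reward, so the two counts are evaluated at different random times. Worse, the claim is actually \emph{false} on the event $\{\kappa = k\}$: there the stop is triggered by arm $k$ itself reaching the count threshold $N_k(t) \geq (1+\lambda t)/(1+\lambda)$, a larger reward makes arm $k$ hoard samples and stop \emph{earlier}, and since the threshold grows with $t$ this yields a \emph{smaller} $N_k(\Tau)$. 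This reversal is not incidental — it is forced by the very monotone increasing property you cite for \cref{prop::lilUCB}: on $\{\kappa = k, \kappa' = k\}$ that property reads $1/N_k(\Tau) \geq 1/N_k'(\Tau')$, i.e. $N_k(\Tau) \leq N_k'(\Tau')$, the opposite direction of your claim, with strict inequality whenever the larger reward strictly accelerates stopping.

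What is true — and is the actual content of the paper's proof in \cref{Appen::proof_of_lilUCB} — is the \emph{event-restricted} inequality $N_k(\Tau) \geq N_k'(\Tau')$ on $\{\kappa \neq k\}$, and proving it requires more than the optimistic sampling rule: one needs the see-saw property of the lil'UCB counts (Fact 2 there: raising a reward of arm $k$ weakly lowers every $N_j(t)$, $j \neq k$), the rewriting of the stopping rule as ``stop when some arm satisfies $N_j(t) \geq (1+\lambda t)/(1+\lambda)$'', and the observation that the arm $j = \kappa \neq k$ triggering the stop at $\Tau$ under the larger reward satisfies the threshold at time $\Tau$ under the smaller reward too, whence $\Tau' \leq \Tau$ and then $N_k(\Tau) \geq N_k'(\Tau) \geq N_k'(\Tau')$. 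In short, the monotonicity of $N_k(\Tau)$ in $X_{i,k}^*$ has opposite signs on $\{\kappa = k\}$ and $\{\kappa \neq k\}$ (a larger reward steals samples from the eventual winner and delays the stop in the latter case, but accelerates the stop in the former), so no global monotonicity argument of the kind you propose can work; the cross-arm comparison you dismiss as routine bookkeeping is precisely the step your proof is missing.
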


The proof of \cref{cor::lilUCB} can be found in \cref{Appen::proof_of_lilUCB}.

\begin{remark}
    \cref{cor::lilUCB} remains true for any other choice of $u^{\text{lil}}$ if the function $n \mapsto u^{\text{lil}}(n)$ is decreasing. 
\end{remark}

\begin{figure}
    \begin{center}
    \includegraphics[scale =  0.65]{./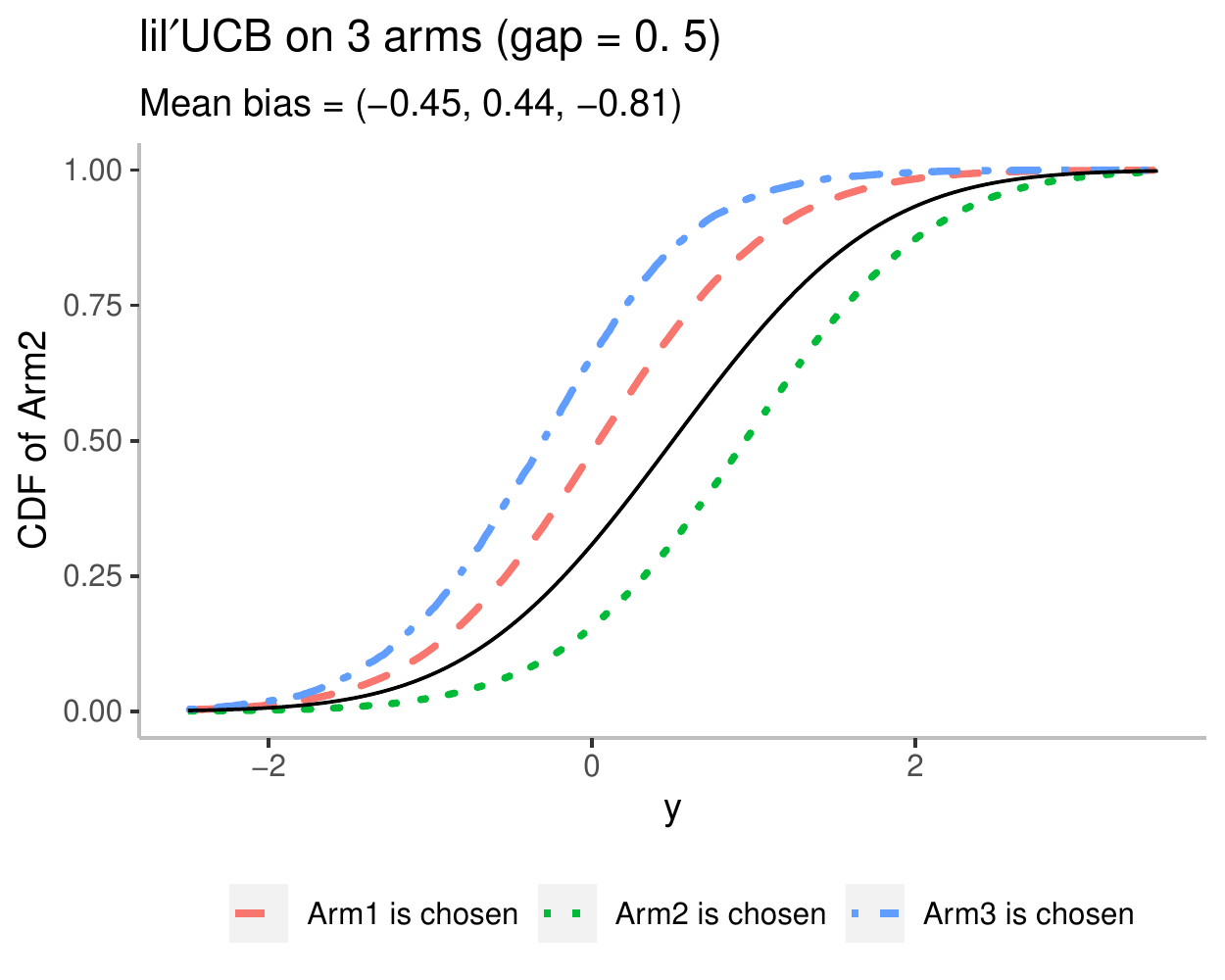}
    \end{center}
    \caption{\em   Average of conditional empirical CDFs of arm 2 from  $10^5$ lil'UCB algorithm runs on three unit-variance normal arms with $\mu_1= 1, \mu_2 = 0.5$ and $\mu_3 = 0$, as described in \cref{subSec::best_arm}. The black solid line refers to the true CDF of arm 2. }
    \label{fig::example5}
\end{figure}

{\bf Experiment:} To verify the previous claims, we conducted $10^5$ trials of the lil'UCB algorithm on three unit-variance normal arms with $\mu_1 = 1, \mu_2 = 0.5$ and $\mu_3 = 0$. It is important to note that the signs of the biases do not depend on the choice of parameters or of the underlying distributions, but the magnitudes of the biases do. To best illustrates the bias results, we use an unusual set of algorithm parameters as $\delta = 0.2$, $\epsilon = 0.1$, $\beta = 0.5$ and $\lambda = 1$ in this experiment.

\cref{fig::example4} shows the averages of the empirical CDFs of arm 1 (the arm with the larges mean) conditioned on each arm being chosen as the best arm. The black solid line corresponds to the true underlying CDF. The red dashed line, which lies below the true CDF, indicates that the empirical CDF of arm 1 conditioned on the event that the arm 1 is chosen as the best arm (i.e., $\kappa = 1$) is negatively biased; this then implies that the sample mean of the chosen arm is positively biased. In contrast, the green dotted and blue dot-dashed lines, lying above the true CDF, show that conditioned on the event the arm 1 is not chosen as the best arm (i.e., $\kappa \neq 1$), the empirical CDF is positively biased and the sample mean is negatively biased.

\cref{fig::example5} displays the averages of the empirical CDFs of arm 2. Though arm 2 is not the best arm, we can check that the signs of the conditional biases follow the same pattern as arm 1. Conditioned on the event that arm 2 is chosen as the best arm ($\kappa = 2$), the empirical CDF is negatively biased (green dotted line), but conditioned on the event that arm 2 is not chosen as the best arm ($\kappa \neq 2)$, the corresponding CDFs are now positively biased (red dashed and blue dot-dashed lines), as expected.


\subsection{Best-Arm Identification: (ii) Sequential Halving} \label{subSec::SH}

The action elimination \citep[][]{paulson1964sequential, even2002pac, even2006action, karnin2013almost} 
is another class of best arm identification algorithms in which each arm in an active set is sampled a fixed number of times and some of arms are eliminated from the active set at each round. In this subsection, we focus on the sequential halving algorithm \citep{karnin2013almost}, which is designed to identify the best arm with high probability and based on a fixed number of samples. To be specific, the sequential halving algorithm for a given time budget $T$ is defined as follows :
 \begin{itemize}
    \item Initialize the set of active arms $\A_1 = [K]$.
    \item In each round $r = 1, \dots, \lceil \log_2 K\rceil $, for each arm $k$ in $\A_r$, draw  
    \[
    m_r:= \left\lfloor \frac{T}{|\A_r| \lceil \log_2 K\rceil} \right\rfloor
    \]
    i.i.d. samples from it, and let $\hat{\mu}_k^{(r)}$ be the average of the observed samples in the round. 
    \item Let $\mathcal{A}_{r+1}$ be the set of $\left\lceil |\A_r / 2| \right\rceil$ arms in $\A_r$ with the largest average based on data collected in round $r$.
    \item Choose the unique arm in $\A_{\lceil \log_2 K\rceil + 1}$ as the best one.
\end{itemize}


Let $\kappa$ be the (random) index of the chosen arm at the final time $T$, and 
for each $k \in [K]$, let $\hat{\mu}_k(T)$ be the sample mean of arm $k$ at time $T$ based on all previously collected $N_k(T)$ samples from that arm.

The following two corollaries show that under the sequential halving algorithm, the sample mean and empirical CDF of each arm is negatively and positively biased, respectively, marginally and conditionally on the event that the arm is not selected as the best one ($\kappa \neq k$). On the other hand, conditionally on the event that the arm is selected as the best one ($\kappa = k$), the sample mean and empirical CDF of each arm is positively and negatively biased, respectively. 


\begin{corollary}\label{cor::SH_marginal}
Suppose that data are collected by the sequential halving algorithm with a time budget $T$. Then, for each $k \in [K]$ and for each $t \leq T$, we have that
\begin{align}
    \mathbb{E}\left[ \hat{\mu}_k(t)  \right]&\leq \mu_k, \\
\inf_{y \in \mathbb{R}}\left( \E\left[\hat{F}_{k, t} (y)\right] - F_k(y) \right)& \geq 0. 
\end{align}
\end{corollary}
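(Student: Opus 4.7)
The plan is to deduce this from \cref{thm::cond_bias} by taking the stopping time to be the deterministic time $t$ (i.e., $\Tau = t$) and the conditioning event to be $C = \Omega$, so that $\mathbbm{1}(C) \equiv 1$ and $\mathbb{P}(C) = 1$. Under this choice the monotonicity hypothesis of \cref{thm::cond_bias} reduces to the statement that $\D^*_\infty \mapsto 1/N_k(t)$ is non-increasing in each $X_{i,k}^*$ (holding the other entries of $\D^*_\infty$ fixed), i.e., that $N_k(t)$ is non-decreasing in $X_{i,k}^*$. Once this is established, the sample-mean inequality follows by applying \cref{thm::cond_bias} with $f$ the identity, and the CDF inequality follows by applying it with $f(x) = \mathbbm{1}(x \leq y)$ for each $y \in \mathbb{R}$.

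To verify monotonicity of $N_k(t)$ in $X_{i,k}^*$, I would distinguish two cases depending on whether the $i$-th draw from arm $k$ occurs on or before time $t$. If it does not, then $X_{i,k}^*$ was never observed by the algorithm by time $t$, so it cannot influence any algorithmic decision, and $N_k(t)$ is unchanged. If the $i$-th draw does occur, say in some round $r'$, then increasing $X_{i,k}^*$ (with everything else in $\D^*_\infty$ held fixed) can only raise the round-$r'$ arm-$k$ sample mean $\hat{\mu}_k^{(r')}$, while leaving the round-$r'$ means of all other active arms untouched. Since the elimination rule at the end of round $r'$ retains the top half of $\A_{r'}$ by empirical mean, this can only preserve or newly create arm $k$'s membership in $\A_{r'+1}$. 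An induction on the round index then shows that the same is true for every subsequent round: the sample means used in rounds $r'' > r'$ are computed from fresh, untouched entries $X_{j,k}^*$ with $j > i$ (together with draws from other arms), so arm $k$'s inclusion in each active set $\A_{r''}$ up through the round $r(t)$ containing $t$ is monotonically preserved. Consequently, $N_k(t)$, which decomposes as $\sum_{r'' < r(t)} m_{r''}\,\mathbbm{1}(k \in \A_{r''})$ plus a within-round contribution from round $r(t)$, can only weakly increase.

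The main obstacle is formalising the inductive propagation cleanly and treating the partially completed round $r(t)$ when $t$ is not a round boundary. Here one must observe that the sampling schedule inside a round is deterministic and independent of the realised arm-$k$ values drawn during that same round, so the within-round contribution to $N_k(t)$ is locally unaffected by $X_{i,k}^*$, and all of the monotonicity comes from the round-by-round elimination dynamics described above. With this in place, both displayed inequalities in the statement are immediate consequences of \cref{thm::cond_bias}, the finiteness of $\E[N_k(t)] \leq t$, and the footnote convention $N_k(t) \geq 1$.
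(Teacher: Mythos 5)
Your proposal is correct and follows essentially the same route as the paper: both reduce the corollary to showing that $N_k(t)$ is non-decreasing in $X_{i,k}^*$ (the paper's inequality~\eqref{eq::SH_basic_ineq}) using the facts that only arm $k$'s own round average is affected, that active sets and the deterministic per-round counts $m_r$ coincide across the two scenarios until any divergence, and that the top-half elimination rule is monotone, before invoking \cref{thm::cond_bias} with the deterministic time $\Tau = t$ and the trivial conditioning event. The only difference is stylistic: the paper argues by contradiction on the round counts $R_k(t)$ versus $R_k'(t)$, whereas you run a direct forward induction over rounds (with the same care about the partially completed round), which is an equivalent packaging of the same argument.
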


\begin{corollary} \label{cor::SH_cond}
   In the settings of the sequential halving algorithm with a time budget $T$, for each $k$ with $\mathbb{P}(\kappa \neq k) >0$, we have that
\begin{align}
    \mathbb{E}\left[ \hat{\mu}_k(T) \mid \kappa \neq k \right]&\leq \mu_k, \\
\inf_{y \in \mathbb{R}}\left( \E\left[\hat{F}_{k, T} (y)\mid \kappa \neq k \right] - F_k(y) \right)& \geq 0. 
\end{align}
Also, for each $k$ with $\mathbb{P}(\kappa = k) >0$, 
\begin{align}
    \mathbb{E}\left[ \hat{\mu}_k(T) \mid \kappa = k \right]&\geq \mu_k, \\
\sup_{y \in \mathbb{R}}\left( \E\left[\hat{F}_{k, T} (y)\mid \kappa = k \right] - F_k(y) \right)& \leq 0. 
\end{align}
\end{corollary}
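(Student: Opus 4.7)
The plan is to invoke \cref{thm::cond_bias} twice: once with the conditioning event $C=\{\kappa=k\}$ (yielding the lower-bound inequalities) and once with $C=\{\kappa\neq k\}$ (yielding the upper-bound inequalities). Since the time budget $T$ is deterministic, it serves as the stopping time $\Tau$ in the theorem, and the integrability conditions are automatic because $N_k(T)\le T$ is bounded and each arm has a finite mean by assumption. So everything reduces to checking the monotonicity of $\mathbbm{1}(C)/N_k(T)$ as a function of $X_{i,k}^*$, with all other entries of $\D_\infty^*$ held fixed.

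The key structural fact is the following. Let $R=\lceil\log_2 K\rceil$ and, for any realization $\D_\infty^*$, let $r_k\in\{1,\dots,R\}$ denote the last round in which arm $k$ is active; by construction, $\kappa=k$ iff $r_k=R$, and $N_k(T)=\sum_{r=1}^{r_k}m_r$. We claim that $r_k$ is a weakly increasing function of $X_{i,k}^*$. Indeed, the $i$-th sample of arm $k$ enters the algorithm only if arm $k$ survives long enough to reach the round $\rho(i)$ that would consume it; if arm $k$ is eliminated before round $\rho(i)$ the perturbation has no effect, and otherwise increasing $X_{i,k}^*$ only raises $\hat{\mu}_k^{(\rho(i))}$, which can only push arm $k$ upward in the round-$\rho(i)$ ranking and thereby preserve or extend its survival, without altering any earlier round. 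Hence $r_k$ cannot decrease, $N_k(T)$ is weakly increasing, and $\mathbbm{1}(\kappa=k)$ is weakly increasing in $X_{i,k}^*$.

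Both conditioning cases follow from this single observation. On $\{\kappa=k\}$ we have $r_k=R$, so $N_k(T)$ equals the constant $N_k^\star:=\sum_{r=1}^R m_r$; for every $\D_\infty^*$,
\[
\frac{\mathbbm{1}(\kappa=k)}{N_k(T)}=\frac{\mathbbm{1}(r_k=R)}{N_k^\star},
\]
which is $1/N_k^\star$ times a weakly increasing function of $X_{i,k}^*$. The increasing branch of \cref{thm::cond_bias}, applied first with $f$ the identity and then with $f(x)=\mathbbm{1}(x\le y)$, yields the two $\{\kappa=k\}$ inequalities of the corollary. On $\{\kappa\neq k\}$, the numerator $\mathbbm{1}(r_k<R)$ is weakly decreasing and $N_k(T)$ is weakly increasing in $X_{i,k}^*$, so the ratio $\mathbbm{1}(\kappa\neq k)/N_k(T)$ is weakly decreasing (it vanishes precisely when $r_k$ jumps to $R$); the decreasing branch of \cref{thm::cond_bias} delivers the two $\{\kappa\neq k\}$ inequalities.

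The step requiring the most care is the $\{\kappa=k\}$ case: a naive attempt would combine the weakly increasing $\mathbbm{1}(\kappa=k)$ with the weakly \emph{decreasing} $1/N_k(T)$ and obtain no monotonicity, whereas the correct observation is that on $\{\kappa=k\}$ the denominator is frozen at the constant $N_k^\star$, so the ratio collapses to a constant multiple of an indicator. Once the monotonicity of $r_k$ and this collapse are in hand, the rest is routine bookkeeping.
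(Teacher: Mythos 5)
Your proposal is correct and follows essentially the same route as the paper's proof: both reduce the corollary to the monotonicity of $\mathbbm{1}(C)/N_k(T)$ in $X_{i,k}^*$ required by \cref{thm::cond_bias}, both rest on the same key lemma that the number of rounds arm $k$ survives (equivalently $N_k(t)$) and the indicator $\mathbbm{1}(\kappa=k)$ are weakly increasing in $X_{i,k}^*$, and both resolve the $\{\kappa=k\}$ case by noting the denominator is frozen at the constant $\sum_{r=1}^{\lceil\log_2 K\rceil} m_r$ on that event. The only difference is presentational: you prove the survival-round monotonicity by a direct forward argument through the round $\rho(i)$ consuming sample $i$, whereas the paper establishes the equivalent fact $R_k(t)\leq R_k'(t)$ by contradiction, using the same observation that the active sets coincide in both scenarios up to the round where the perturbed sample enters.
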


The proofs of \cref{cor::SH_marginal} and \ref{cor::SH_cond} can be found in \cref{Appen::proof_of_SH}. Due to limited space, an experiment about the bias of the sequential halving algorithm is presented in \cref{Appen::experiment_SH}.

\section{Summary}\label{sec::summary}
In this paper, we have investigated the sign of the conditional bias of monotone functions of the rewards in the MAB framework under an arbitrary conditioning event, generalizing the results in \citet{shin2019bias}, and complementing the results on the magnitude of the bias \citep{shin2019risk}. In our analysis, we have exploited certain natural monotonicity properties of  MAB experiments and have characterized the impact on the bias of both adaptivity in the data acquisition process and in the selection of the target for inference. 

Several interesting extensions of our results are worth pursuing. We emphasize two important ones:
\begin{itemize}
    \item 
    It is still an open problem how to characterize the bias (marginal or conditional) of other important functionals that are not necessarily monotone or linear, such as the sample variance and sample quantiles.
    \item Several debiasing methods have been proposed in the MAB literature: see, e.g., \citet{xu2013estimation,deshpande2017accurate, neel2018mitigating, nie2018adaptively,  hadad2019confidence}. However, the existing approaches typically only adjust for adaptive sampling but ignore the other sources of adaptivity. 
    \textbf{Current debiasing methods do not offer theoretical guarantees at stopping times, and also do not account for natural conditioning events in MABs, like selection of a promising arm.}  Furthermore, they are typically not interested in characterizing the \emph{sign} of the bias, and thus have complementary aims.  
    It is of interest to investigate how our results and techniques can be used in order to design more general debiased estimators.
    \end{itemize}


\bibliography{cond_bias}
\bibliographystyle{icml2020}

\appendix

\onecolumn
\clearpage
\newpage

\section*{\centerline{SUPPLEMENTARY MATERIAL}}
\section{Proofs}
\subsection{Proof of \cref{thm::cond_bias}}
\label{Appen::proof_of_cond_bias}
The proof of \cref{thm::cond_bias} is closely related to the proof of \cref{prop::marginal_bias} in \citet{shin2019bias}. However, in this proof, we decouple the bias part from the integrability condition which makes the proof significantly simpler than the one in \citet{shin2019bias}.

Under the condition in \cref{thm::cond_bias}, we first prove that if the function $\D^*_\infty \mapsto  \mathbbm{1}\left(C\right) / N_k(\Tau)$ is an decreasing function of $X_{i,k}^*$ while keeping all other entries in $\D^*_\infty$ fixed for each $i$ then, for any $t \in \mathbb{N}$ and $y \in \mathbb{R}$, the following inequality holds.
\begin{equation} \label{eq::main_lemma}
    \mathbb{E}\left[\frac{\mathbbm{1}(C)}{N_k(\Tau)}\mathbbm{1}(N_k(\Tau) < \infty) \mathbbm{1}\left(A_t = k\right) \left[\mathbbm{1}(Y_t \leq y) - F_k(y)\right]\right] \geq 0 
\end{equation}
\begin{proof}[Proof of inequality~\eqref{eq::main_lemma}]
First note that if $\D^*_\infty \mapsto  \mathbbm{1}\left(C\right) / N_k(\Tau)$ is a decreasing function of $X_{i,k}^*$ then the following function is also a decreasing function of $X_{i,k}^*$:
\begin{equation}
  D^*_\infty \mapsto  \frac{\mathbbm{1}\left(C\right)}{N_k(\Tau)}\mathbbm{1}(N_k(\Tau) < \infty) := h(\D^*_\infty)  
\end{equation}
Then, by the tabular representation of MAB, we can rewrite the LHS of inequality~\eqref{eq::main_lemma} as follows:
\begin{align*}
     &\mathbb{E}\left[\frac{\mathbbm{1}(C)}{N_k(\Tau)}\mathbbm{1}(N_k(\Tau) < \infty) \mathbbm{1}\left(A_t = k\right) \left[\mathbbm{1}(Y_t \leq y) - F_k(y)\right]\right] \\
     &=\mathbb{E}\left[h(\D_{\infty}^*)  \mathbbm{1}\left(A_t = k\right) \left[\mathbbm{1}(X_{N_k(t),k}^* \leq y) - F_k(y)\right]\right] \\
     &=\mathbb{E}\sum_{i=1}^t\left[h(\D_{\infty}^*) \mathbbm{1}\left(A_t = k, N_k(t) = i\right) \left[\mathbbm{1}(X_{i,k}^* \leq y) - F_k(y)\right]\right] \\
     &= \sum_{i=1}^t\mathbb{E}\left[h(\D_{\infty}^*) \mathbbm{1}\left(A_t = k, N_k(t) = i\right) \left[\mathbbm{1}(X_{i,k}^* \leq y) - F_k(y)\right]\right],
\end{align*}
where the third equality comes from the fact $N_k(t) \in \{1,\dots,t\}$. Therefore, to prove the inequality~\eqref{eq::main_lemma}, it is enough to show the following inequality:
\begin{equation}
    \mathbb{E}\left[ \mathbbm{1}\left(A_t = k, N_k(t) = i\right)h(\D_{\infty}^*) \left[\mathbbm{1}(X_{i,k}^* \leq y) - F_k(y)\right]\right] \geq 0.
\end{equation}
Note that the term $\mathbbm{1}\left(A_t = k, N_k(t) = i\right)$ does not depend on $X_{i,k}^*$ by the definition of $A_t$ and $N_k(t)$. 

Now, let $\D_{\infty}^{*'}$ be an other the tabular representation which is identical to $\D_{\infty}^{*}$ except the $(i,k)$-th entry of $X_\infty^*$ in $\D_{\infty}^*$ being replaced with an independent copy $X_{i,k}^{*'}$ from the same distribution $P_k$. 

Since the function $h$ is a decreasing function of $X_{i,k}^*$ while keeping all other entries in $\D^*_\infty$ fixed, we have that
\begin{equation}
    \left[h(\D_{\infty}^*)- h(\D_{\infty}^{*'})\right] \left[\mathbbm{1}(X_{i,k}^* \leq y) - \mathbbm{1}(X_{i,k}^{*'}\leq y)\right] \geq 0,
\end{equation}
which implies that
\begin{equation}
    \begin{aligned}
    &h(\D_{\infty}^*)\left[\mathbbm{1}(X_{i,k}^* \leq y) -F_k(y)\right] +     h(\D_{\infty}^{*'})\left[\mathbbm{1}(X_{i,k}^{*'} \leq y) -F_k(y)\right] +   \\
    & \geq h(\D_{\infty}^{*'})\left[\mathbbm{1}(X_{i,k}^* \leq y) -F_k(y)\right] +     h(\D_{\infty}^*)\left[\mathbbm{1}(X_{i,k}^{*'} \leq y) -F_k(y)\right].
    \end{aligned}
\end{equation}
By multiplying $\mathbbm{1}\left(A_t = k, N_k(t) = i\right)$ and taking expectations on both sides, we can show the inequality~\eqref{eq::main_lemma} hold as follows:
\begin{align}
    &2\mathbb{E}\left[\mathbbm{1}\left(A_t = k, N_k(t) = i\right)h(\D_{\infty}^*)\left[\mathbbm{1}(X_{i,k}^* \leq y) -F_k(y)\right]\right] \\
    &\geq 2\mathbb{E}\left[\mathbbm{1}\left(A_t = k, N_k(t) = i\right)h(\D_{\infty}^*)\left[\mathbbm{1}(X_{i,k}^{*'} \leq y) -F_k(y)\right]\right]\\
    &=  2\mathbb{E}\left[\mathbbm{1}\left(A_t = k, N_k(t) = i\right)h(\D_{\infty}^*)\right]\mathbb{E}\left[\mathbbm{1}(X_{i,k}^{*'} \leq y) -F_k(y)\right] \\
    & \geq 0,
\end{align}
where the first equality comes from the independence between $\mathbbm{1}\left(A_t = k, N_k(t) = i\right)h(\D_{\infty}^*)$ and $X_{i,k}^{*'}$, and the second inequality holds since $\mathbb{E}\left[\mathbbm{1}(X_{i,k}^{*'} \leq y)\right] = F_k(y)$. 
\end{proof}

Based on the inequality~\eqref{eq::main_lemma}, we are ready to prove \cref{thm::cond_bias}.
\begin{proof}[Proof of \cref{thm::cond_bias}]

First, suppose the function $\D^*_\infty \mapsto  \mathbbm{1}\left(C\right) / N_k(\Tau)$ is an decreasing function of $X_{i,k}^*$ while keeping all other entries in $\D^*_\infty$ fixed for each $i$. Let $\{L_t\}_{t \in \mathbb{N}}$ be a sequence of random variables defined  as follows:
\begin{equation}
    L_t :=  \sum_{s=1}^t\frac{\mathbbm{1}(C)}{N_k(\Tau)}\mathbbm{1}(N_k(\Tau) < \infty) \mathbbm{1}\left(A_s = k\right) \left[\mathbbm{1}(Y_s \leq y) - F_k(y)\right],~~\forall t \in \mathbb{N}.
\end{equation}
From the inequality~\eqref{eq::main_lemma}, we have 
\begin{equation} \label{eq::main_lemma_implies}
\mathbb{E}\left[L_t\right] = \sum_{s=1}^t \mathbb{E}\left[\frac{\mathbbm{1}(C)}{N_k(\Tau)} \mathbbm{1}\left(A_s = k\right)\mathbbm{1}(N_k(\Tau) < \infty) \left[\mathbbm{1}(Y_s \leq y) - F_k(y)\right]\right] \geq 0,~~\forall t \in \mathbb{N}.
\end{equation}

Note that $N_k(\Tau) := \sum_{t=1}^\Tau \mathbbm{1}(A_t= k) = \sum_{t=1}^\infty \mathbbm{1}(A_t= k)$ since it is understood that for $t > \Tau$, $\mathbbm{1}(A_t = k) = 0$. Therefore, we know that, for each $y \in \mathbb{R}$, the sequence of random variables $\{L_t\}_{t\in\mathbb{N}}$ converges to $\left[\hat{F}_{k,\Tau}(y) - F_k(y)\right]\mathbbm{1}(C)\mathbbm{1}(N_k(\Tau) < \infty)$ almost surely. Also, it can be easily checked for each $t \in \mathbb{N}$, $|L_t|$ is upper bounded by $2$. Hence, from the dominated convergence theorem and the inequality~\eqref{eq::main_lemma_implies}, we have
\begin{align}
    0 \leq \lim_{t\to\infty}\mathbb{E}[L_t] &=\mathbb{E}\left[\left[\hat{F}_{k,\Tau}(y) - F_k(y)\right]\mathbbm{1}(C)\mathbbm{1}(N_k(\Tau) < \infty) \right] \\
    & =\mathbb{E}\left[\hat{F}_{k,\Tau}(y) \mathbbm{1}(C)\mathbbm{1}(N_k(\Tau) < \infty) \right] -  F_k(y)\mathbb{P}(C \cap \{N_k(\Tau) < \infty\}).
\end{align}
Since $\hat{F}_{k,\Tau}(y)\mathbbm{1}(N_k(\Tau) = \infty) = F_{k}(y)\mathbbm{1}(N_k(\Tau) = \infty)$ almost surely, the last inequality also implies that
\begin{equation}
    F_k(y)\mathbb{P}(C) \leq \mathbb{E}\left[\hat{F}_{k,\Tau}(y) \mathbbm{1}(C)\right]
\end{equation}
Since we assumed $\mathbb{P}(C) > 0$, by multiplying $1 / \mathbb{P}(C)$ on both sides, we have 
\begin{equation} \label{eq::main_ineq_in_proof}
    F_k(y) \leq \mathbb{E}\left[\hat{F}_{k,\Tau}(y) \mid C\right],    
\end{equation}
as desired. The inequality~\eqref{eq::main_ineq_in_proof} shows that the underlying distribution of arm $k$ stochastically dominates the empirical distribution of arm $k$ in the conditional expectation. In this case, it is well-known that for any non-decreasing integrable function $f$, the following inequality holds
\begin{equation}\label{eq::cond_bias_func}
    E_k f \geq \mathbb{E}\left[\hat{E}_{k,\Tau} f \mid C\right].
\end{equation}
For the completeness of the proof, we formally prove the inequality~\eqref{eq::cond_bias_func}. Since $f$ is integrable, without loss of generality, we may assume $f \geq 0$. For any $x \in \mathbb{R}$, define  $f^{-1}(x) := \inf\{y: f(y) > x\}$. Since $f$ is non-decreasing, for any probability measure $P$, the following equality holds 
\[
P\left(\left\{y: f(y) > x \right\}\right) = P\left(\left\{y: y > f^{-1}(x) \right\}\right), 
\]
for all but at most countably many $x\in \mathbb{R}$ which implies that
\begin{align}
     E_k f  &= \int_{0}^\infty P_k\left(\left\{y: f(y) > x \right\}\right)\mathrm{d}x \\
     &= \int_{0}^\infty 1-F_k\left(f^{-1}(x)\right)\mathrm{d}x \\
     &\geq \int_{0}^\infty 1-\mathbb{E}\left[\hat{F}_{k,\Tau}\left(f^{-1}(x)\right)\mid C\right]\mathrm{d}x \\
     & = \int_{0}^\infty \mathbb{E}\left[\hat{P}_{k,\Tau}\left(\left\{y: f(y) > x \right\}\right)\mid C\right]\mathrm{d}x \\
     & = \mathbb{E}\left[\hat{E}_{k,\Tau} f \mid C\right],
\end{align}
where the first and last equalities come from the Fubini's theorem with the integrability condition on $f$, and the first inequality comes from the inequality~\eqref{eq::main_ineq_in_proof}.

From the same argument with reversed inequalities, it can be shown that if the function $\D^*_\infty \mapsto  \mathbbm{1}\left(C\right) / N_k(\Tau)$ is an increasing function of $X_{i,k}^*$ while keeping all other entries in $\D^*_\infty$ fixed for each $i$, we have 
\begin{equation} \label{eq::main_ineq_in_proof_opposit}
    F_k(y) \geq \mathbb{E}\left[\hat{F}_{k,\Tau}(y) \mid C\right],~~\forall y \in \mathbb{R}.
\end{equation}
Equivalently, for any non-decreasing integrable function $f$, we have
\begin{equation}\label{eq::cond_bias_func_opposit}
    E_k f \leq \mathbb{E}\left[\hat{E}_k f \mid C\right],
\end{equation}
which completes the proof of Theorem~\ref{thm::cond_bias}.
\end{proof}

\subsection{Proof of \cref{cor::lilUCB}}
\label{Appen::proof_of_lilUCB}

Before proving \cref{cor::lilUCB} formally, we first provide an intuition as to why, for any reasonable and efficient algorithm for the best-arm identification problem, the sample mean and empirical CDF of an arm are negative and positive biases, respectively, conditionally on the event that  the arm is not chosen as the best arm. 

For any $k \in [K]$ and $i \in \mathbb{N}$, let $\D_\infty^*$ and $\D_\infty^{*'}$ be two collections of all possible arm rewards and external randomness that agree with each other except $X_{i,k}^* \geq X_{i,k}^{*'}$. Since we have a smaller reward from arm $k$ in the second scenario $\D_{\infty}^{*'}$, if $\kappa \neq k$ under the first scenario $\D_{\infty}^*$, any reasonable algorithm also would not pick the arm $k$ as the best arm under the more unfavorable scenario $\D_{\infty}^{*'}$. In this case, we know that $\kappa \neq k$ implies $\kappa' \neq k$. Also note that any efficient algorithm should be able to exploit the more unfavorable scenario $D_{\infty}^{*'}$ to easily identify arm $k$ as a suboptimal arm and choose another arm as the best one by using less samples from arm $k$. Therefore, we would have $N_k(\Tau) \geq N_k'(\Tau')$. As a result, we can expect that, from any reasonable and efficient algorithm, we would have $    \frac{\mathbbm{1}(\kappa \neq k)}{N_k(\Tau)} \leq  \frac{\mathbbm{1}(\kappa' \neq k)}{N_k'(\Tau')}$ which implies that for each $i$, the function $\D^*_\infty \mapsto  \mathbbm{1}\left(C\right) / N_k(\Tau)$ is a decreasing function of $X_{i,k}^*$ while keeping all other entries in $\D^*_\infty$ fixed. Then, from \cref{thm::cond_bias}, we have that the sample mean and empirical CDF of arm $k$ are negatively and positively biased conditionally on the event $\kappa \neq k$, respectively. Below, we formally verify that this intuition works for the lil'UCB algorithm. The proof is based on the following two facts about the lil'UCB algorithm:
\begin{itemize}
    \item {\bf Fact 1.} The lil'UCB algorithm has an optimistic sampling rule. That is,  for any fixed , $i, t \in \mathbb{N}$ and $k \in [K]$, the function  $\D^*_\infty \mapsto  N_k(t)$ is an increasing function of $X_{i,k}^*$  while keeping all other entries in $\D^*_\infty$ fixed \citep[see Fact 3 in][]{shin2019bias}.
    \item {\bf Fact 2.} Let $\D_\infty^*$ and $\D_\infty^{*'}$ be two collections of all possible arm rewards and external randomness that  agree with  each other  except in their $k$-th column of stacks of rewards $X_{\infty}^*$ and $X_{\infty}^{*'}$. For $j \in [K]$, let $N_j(t)$ and $N_j'(t)$ be the numbers of draws from arm $j$ under $\D_\infty^*$ and $\D_\infty^{*'}$ respectively. Then for each $t \in \mathbb{N}$, the following implications hold for lil'UCB algorithm  \citep[see Fact 3 and Lemma 9 in][]{shin2019bias}:
    \begin{align*} 
    N_k(t) \leq  N_k'(t) \Rightarrow N_j(t) \geq  N_j'(t),    \quad \text{ for all } j \neq k, \\
    N_k(t) \geq N_k'(t) \Rightarrow N_j(t) \leq N_j'(t),    \quad \text{ for all } j \neq k,
    \end{align*}
    which also implies that
    \begin{equation}
N_k(t) = N_k'(t) \Rightarrow N_j(t) = N_j'(t), \quad \text{ for all } j \neq k. \nonumber
\end{equation}
\end{itemize}

\begin{proof}[Proof of Corollary~\ref{cor::lilUCB}]
For any given $i \in \mathbb{N}$ and $k \in [K]$, let $\D_\infty^*$ and $\D_\infty^{*'}$ be two collections of all possible arm rewards and external randomness that  agree with  each other  except  $(i,k)$-th entries, $X_{i,k}^*$ and $X_{i,k}^{*'}$ of their stacks of rewards. Let $(N_k(t), N_k'(t))$ denote the numbers of draws from arm $k$ up to time $t$. Let $(\Tau, \Tau')$ be the stopping times and $(\kappa, \kappa')$ be choosing functions of the lil'UCB algorithm under $\D_\infty^*$ and $\D_\infty^{*'}$ respectively. 

Suppose $X_{i,k}^{*} \geq X_{i,k}^{*'}$. To prove the claimed bias result, it is enough to show that the function $\D^*_\infty \mapsto  \mathbbm{1}\left(C\right) / N_k(\Tau)$ is a decreasing function of $X_{i,k}^*$ while keeping all other entries in $\D^*_\infty$ fixed which corresponds to prove the following inequality holds:
\begin{equation} \label{eq::lil_ineq}
    \frac{\mathbbm{1}(\kappa \neq k)}{N_k(\Tau)} \leq  \frac{\mathbbm{1}(\kappa' \neq k)}{N_k'(\Tau')}.
\end{equation}
Note that if $\kappa = k$ or $N_k(\Tau) = \infty$, the inequality~\eqref{eq::lil_ineq} holds trivially. Therefore, for the rest of the proof, we assume $\kappa \neq k$ and $N_k(\Tau)< \infty$. 

We will first prove the inequality $N_k(\Tau) \geq N_k'(\Tau')$ holds. From Fact 1 and the assumption $X_{i,k}^* \geq X_{i,k}^{*'}$, we have $N_k(t)~\geq~N_k'(t)$ for any fixed $t >0$. Then, by Fact 2, we also have $N_j(t)~\leq~N_j'(t)$ for any $j \neq k$. Since $\sum_{i \neq j} N_i(t) = t - N_j(t)$ for all $t$, we can rewrite the lil'UCB stopping rule as stopping whenever there exists $j \in [K]$ such that the inequality $N_j(t) \geq \frac{1 + \lambda t}{1 + \lambda}$ holds. Therefore, from the definition of the stopping rule with the fact  $N_j(t)~\leq~N_j'(t)$ for any $t \geq 1$ and $j \neq k$, at the stopping time $\Tau$, we have 
\begin{equation} \label{eq::ineq_1}
\frac{1 + \lambda \Tau}{1+ \lambda} \leq N_j(\Tau)  \leq N_j'(\Tau),     
\end{equation}
for some $j \neq k$ which also implies that the stopping condition is also satisfied for arm $j$  at time $\Tau$ under $\D_\infty^{*'}$ which implies that the stopping time under $\D_\infty^{*'}$ must be at most $\Tau$. Therefore we have $\Tau' \leq \Tau$. Now, since the inequality $N_k(t)~\geq~N_k'(t)$ holds for any $t \geq 1$, we have $N_k(\Tau)~\geq~N_k'(\Tau)$. Finally, since $t\mapsto N_k'(t)$ is a non-decreasing function, we can conclude $N_k(\Tau) \geq N_k'(\Tau) \geq N_k'(\Tau')$.

Since we proved $N_k(\Tau) \geq N_k'(\Tau')$, to complete the proof of \cref{cor::lilUCB}, it is enough to show that $\kappa \neq k$ implies $\kappa' \neq k$. We prove this statement by the 
proof by contradiction. Suppose $\kappa \neq k$ but $\kappa'  = k$. Then, there exists $j \neq k$ such that $\kappa = j$. By the definition of $\Tau$ and $\kappa$, we know that
\begin{equation} \label{eq::lilUCB_lemma1}
N_j(\Tau)  > N_k(\Tau).    
\end{equation}
Similarly, we can show that
\begin{equation} \label{eq::lilUCB_lemma2}
N_j'(\Tau')  < N_k'(\Tau').    
\end{equation}
It is important to note that these inequalities are strict. Note that since we draw a singe sample at each time, if $N_j(\Tau) = N_k(\Tau)$ then at the time $\Tau - 1$, either arm $j$ or $k$ should satisfy the stopping rule which contradicts to the definition of $\Tau$.

Recall that, in \cref{eq::ineq_1}, we showed that if $\kappa = j$, at stopping time $\Tau$, we have
\[
   \frac{1 + \lambda \Tau}{1+ \lambda} \leq N_j(\Tau)  \leq N_j'(\Tau),
\]
which implies $\Tau' \leq \Tau$. By the same argument, at the stopping time $\Tau'$ with the assumption $\kappa' = k$, we have
\[
   \frac{1 + \lambda \Tau'}{1+ \lambda} \leq N_k'(\Tau')  \leq N_k(\Tau'),
\]
which also implies $\Tau \leq \Tau'$. From these two inequalities on stopping times, we have $\Tau' = \Tau$. Finally, by combining inequalities between pairs of $N_k, N_k', N_j, N_j'$ with the observation $\Tau' = \Tau$, we have
\[
 N_j'(\Tau') < N_k'(\Tau') \leq N_k(\Tau') = N_k(\Tau) < N_j(\Tau) \leq N_j'(\Tau) = N_j'(\Tau')
\]
where the first inequality comes from the inequality~\eqref{eq::lilUCB_lemma2}. The second inequality come from $N_k' \leq N_k$. The first equality comes from $\Tau' = \Tau$ and the third inequality comes from the inequality~\eqref{eq::lilUCB_lemma1}. The last inequality comes from $N_j \leq N_j'$ and the final equality comes from $\Tau = \Tau'$.

This is a contradiction, and, therefore, $\kappa \neq k$  implies that $\kappa' \neq k$. This proves that for each $i$, the function $\D^*_\infty \mapsto  \mathbbm{1}\left(C\right) / N_k(\Tau)$ is a decreasing function of $X_{i,k}^*$ while keeping all other entries in $\D^*_\infty$ fixed and, from \cref{thm::cond_bias}, we can conclude that the sample mean and empirical CDF of arm $k$ from the lil'UCB algorithm are negatively and positive biased conditionally on the event the arm $k$ is not chosen as the best arm, respectively.
\end{proof}

\subsection{Proofs of \cref{cor::SH_marginal} and \ref{cor::SH_cond}} \label{Appen::proof_of_SH}

For any given $i \in \mathbb{N}$ and $k \in [K]$, let $\D_\infty^*$ and $\D_\infty^{*'}$ be two infinite collections of arm rewards and external randomness that  agree with  each other  except  on the $(i,k)$-th entries, $X_{i,k}^*$ and $X_{i,k}^{*'}$ of their stacks of rewards. Without loss of generality, we assume $X_{i,k}^* \leq X_{i,k}^{*'}$. Finally, let $(N_k(t), N_k'(t))$ denote the numbers of draws from arm $k$ up to time $t \leq T$.

We first prove that the following inequality holds:
\begin{equation} \label{eq::SH_basic_ineq}
N_k(t) \leq N_k'(t),~~\forall t \leq T.
\end{equation}
From \cref{thm::cond_bias}, the above inequality implies that the sample mean and empirical CDF of each arm at a fixed time are negatively and positively biased, respectively as claimed in \cref{cor::SH_marginal}.

\begin{proof}[Proof of inequality~\eqref{eq::SH_basic_ineq}]
For each $t \leq T$, let $R_k(t)$ and $R_k'(t)$ be numbers of rounds in which arm $k$ is in the active set up to time $t$ under $\D_\infty^*$ and $\D_\infty^{*'}$, respectively. Since $N_k(t) = \sum_{r=1}^{R_k(t)}m_r$ and $N_k'(t) = \sum_{r=1}^{R_k'(t)}m_r$, it is enough to show $R_k(t) \leq R_k'(t)$. 
Now, for the sake of contradiction, suppose $R_k(t) > R_k'(t)$. Since arm $k$ is active at rounds $r = 1, \dots, R_k'(t)$ under both  $\D_\infty^*$ and $\D_\infty^{*'}$, we must have that $\hat{\mu}_k^{(r)} \leq \hat{\mu}_k^{'(r)}$ for each $r = 1, \dots, R_k'(t)$, where $\hat{\mu}_k^{(r)}$ and $\hat{\mu}_k^{'(r)}$ are averages of the observed samples from arm $k$ at each round $r$ under $\D_\infty^*$ and $\D_\infty^{*'}$, respectively. 
By the same logic, we have that $\A_r = \A_r'$ for each $r = 1, \dots, R_k'(t)$, where $\A_r$ and $\A_r'$ are the sets of active arms under $\D_\infty^*$ and $\D_\infty^{*'}$, respectively. In particular, at the $R_k'(t)$-th round, under both scenarios $\D_\infty^*$ and $\D_\infty^{*'}$,  we have the exactly same active sets and the same sample averages of all the active arms except for arm $k$, for which it hols that $\hat{\mu}_k^{(R_k'(t))} \leq \hat{\mu}_k^{'(R_k'(t))}$. However, it cannot be the case that arm $k$ is eliminated from the active set after the $R_k'(t)$-th round under $\D_\infty^{*'}$ but the same arm still remains active after the same round under the scenario $\D_\infty^{*}$ in which the arm $k$ has even a smaller sample average.  This proves  $R_k(t) \leq R_k'(t)$, which  implies the claimed inequality~\eqref{eq::SH_basic_ineq} as desired. 
\end{proof}

For the proof of \cref{cor::SH_cond}, let $\kappa$ and $\kappa'$ be indices of the chosen arm under $\D_\infty^*$ and $\D_\infty^{*'}$, respectively. To prove the first two inequalities, we need to to show that
\begin{equation}
    \frac{\mathbbm{1}(\kappa \neq k)}{N_k(T)} \geq  \frac{\mathbbm{1}(\kappa' \neq k)}{N_k'(T)}.
\end{equation}
From \cref{eq::SH_basic_ineq}, we have that $N_k(T) \leq N_k'(T)$. Therefore, it is enough to show that $\mathbbm{1}(\kappa \neq k) \geq \mathbbm{1}(\kappa' \neq k)$ which is equivalent to showing that
\begin{equation} \label{eq::SH_indicators}
    \mathbbm{1}(\kappa = k) \leq \mathbbm{1}(\kappa' = k).
\end{equation}
For the sake of contradiction, suppose $\kappa = k$ but $\kappa' \neq k$. Recall that $R_k(T)$ and $R_k'(T)$ are the number of rounds in which arm $k$ is in the active set up to time $T$ under $\D_\infty^*$ and $\D_\infty^{*'}$ respectively. Since  $\kappa = k$ but $\kappa' \neq k$, we have  $\lceil \log_2 K \rceil = R_k(T) > R_k'(T)$. However this contradicts the fact $N_k(T) \leq N_k'(T)$, since 
$N_k(T) = \sum_{r=1}^{R_k(T)}m_r$ and $N_k'(T) = \sum_{r=1}^{R_k'(T)}m_r$. Hence, inequality~\eqref{eq::SH_indicators} is true. By \cref{thm::cond_bias} this implies  the first two inequalities in \cref{cor::SH_cond}. 

Similarly, to prove the last two inequalities in \cref{cor::SH_cond}, we need to to show that
\begin{equation} \label{eq::SH_ineq2}
    \frac{\mathbbm{1}(\kappa = k)}{N_k(T)} \leq  \frac{\mathbbm{1}(\kappa' = k)}{N_k'(T)}.
\end{equation}
If $\kappa \neq k$, the above inequality holds trivially. Hence, we can assume $\kappa = k$ and, from the inequality~\eqref{eq::SH_indicators}, we also have $\kappa' = k$. Since arm $k$ is chosen as the best one under both $\D_\infty^*$ and $\D_\infty^{*'}$, we have that $N_k(T) = N_k'(T) = \sum_{r=1}^{\lceil \log_2 K \rceil}m_r$ which proves inequality~\eqref{eq::SH_ineq2}. The result again follows from  \cref{thm::cond_bias}.


\section{Additional Simulation Results}
In this section, we present additional simulation results for Section~\ref{sec::simulations}~and~\ref{sec::summary} which are omitted from the main part due to the page limit.

\subsection{Conditional Bias Under Alternative Hypothesis in \cref{subSec::eg2}}

\begin{figure}
    \begin{center}
    \includegraphics[scale =  0.65]{./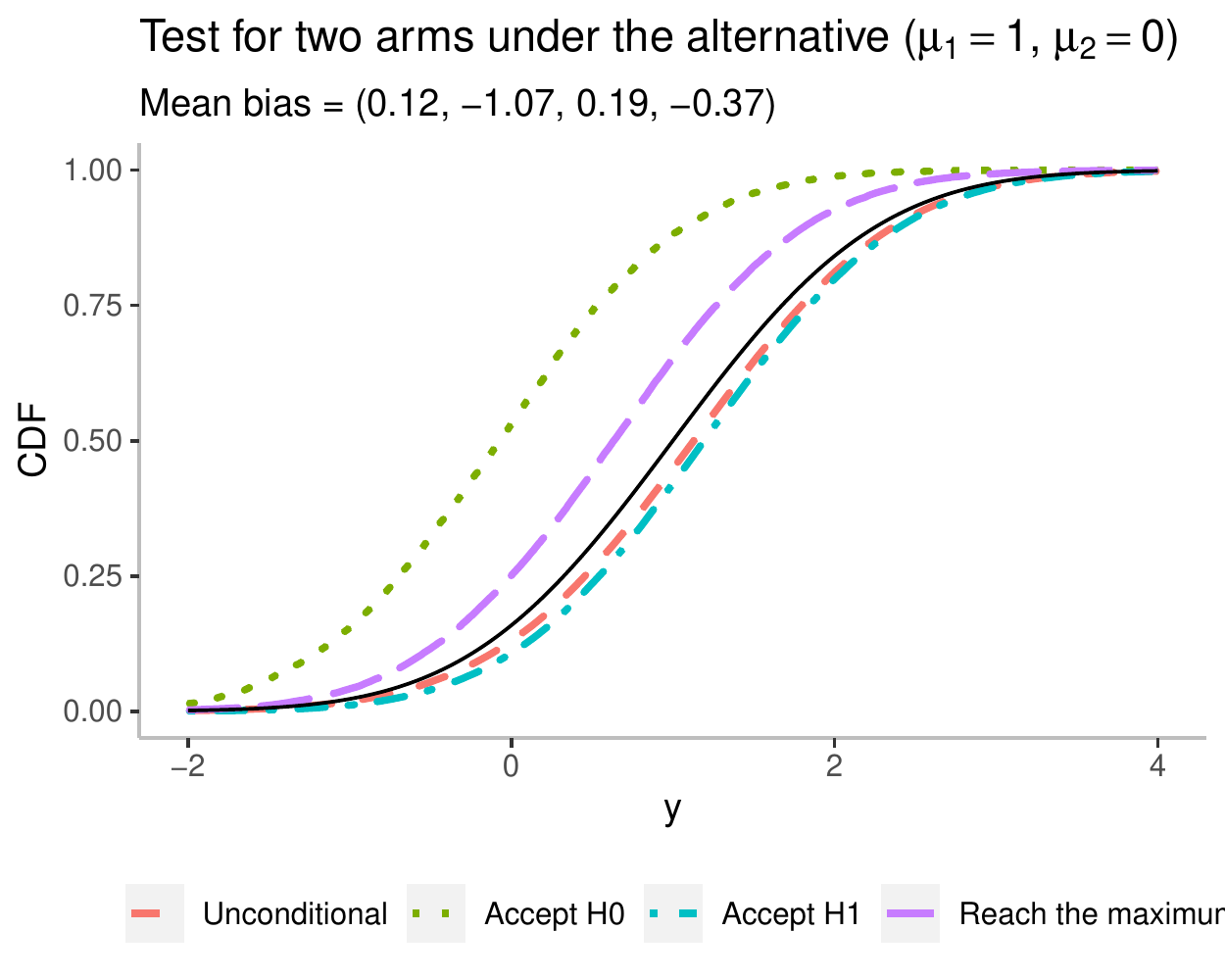}
    \end{center}
    \caption{\em  Average of conditional empirical CDFs of arm 1 from repeated sequential tests for two arms under the alternative hypothesis ($\mu_1 = 1, \mu_2 = 0$). See \cref{subSec::eg2} for the detailed explanation about the sequential test.} 
    \label{fig::example2_appen}
\end{figure}

As we conducted in \cref{subSec::eg2}, we have two standard normal arms with means $\mu_1$ and  $\mu_2$. Then, we use the following upper and lower stopping boundaries to test whether $\mu_1 \leq \mu_2$ or not:
\begin{equation}
    U(t) := z_{\alpha /2}\sqrt{\frac{2}{t}}, \quad \text{and} \quad L(t) = - U(t),
\end{equation}
where $\alpha$ is set to $0.2$ to show the bias better. In contrast to the experiment in \cref{subSec::eg2} in which the true means are equal to each other, in this experiment, we set $\mu_1 =1$ and $\mu_2 =0$ to make the alternative hypothesis is true.

\cref{fig::example2_appen} show the marginal and conditional biases of the empirical CDFs and sample means for arm $1$ based on $10^5$ repetitions of the experiment. The black solid line corresponds to the true underlying CDF. The red dashed line refers to the average of the marginal CDFs, and the purple long-dashed line corresponds to the average of the empirical CDFs conditionally on reaching the maximal time. For these two cases, although the marginal CDF is negatively and the conditional CDF is positively biased, these are not general phenomena and the sign of bias can be changed as we change mean parameters.

However, for the cases corresponding to accepting $H_1$ (blue dot-dashed line) and accepting $H_0$ (green dotted line), we can check that signs of biases of CDFs and sample means are consistent with what \cref{thm::cond_bias} and corresponding  inequalities \eqref{eq::eg2_naive_H1_mean} to \eqref{eq::eg2_naive_H0} described. Also note that the bias results do not depend on whether the arms are under the null or alternative hypotheses.

\subsection{Experiment About the Bias of the Sequential Halving Algorithm in \cref{subSec::SH}}
\label{Appen::experiment_SH}

\begin{figure}
    \begin{center}
    \includegraphics[scale =  0.65]{./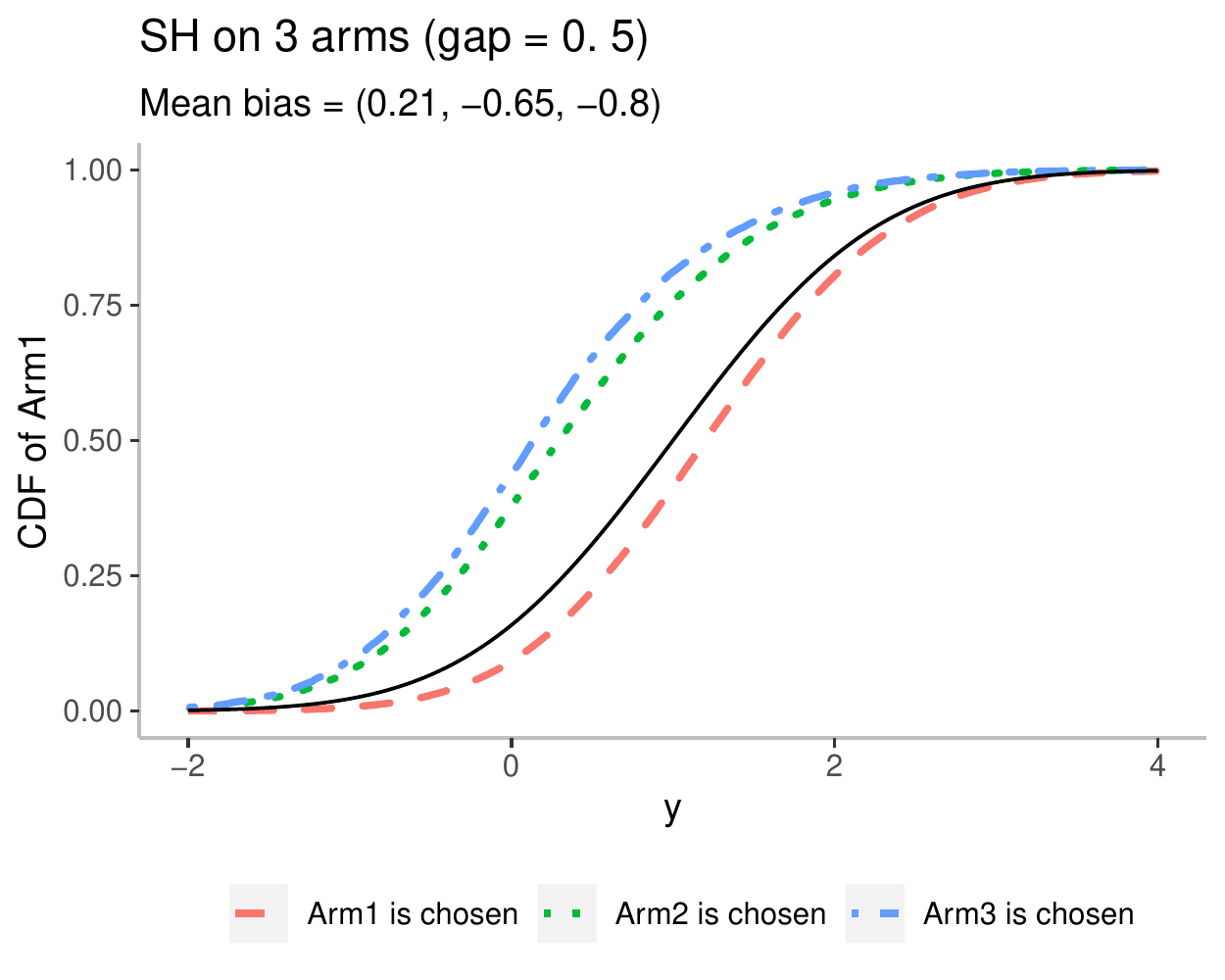}%
    \includegraphics[scale =  0.65]{./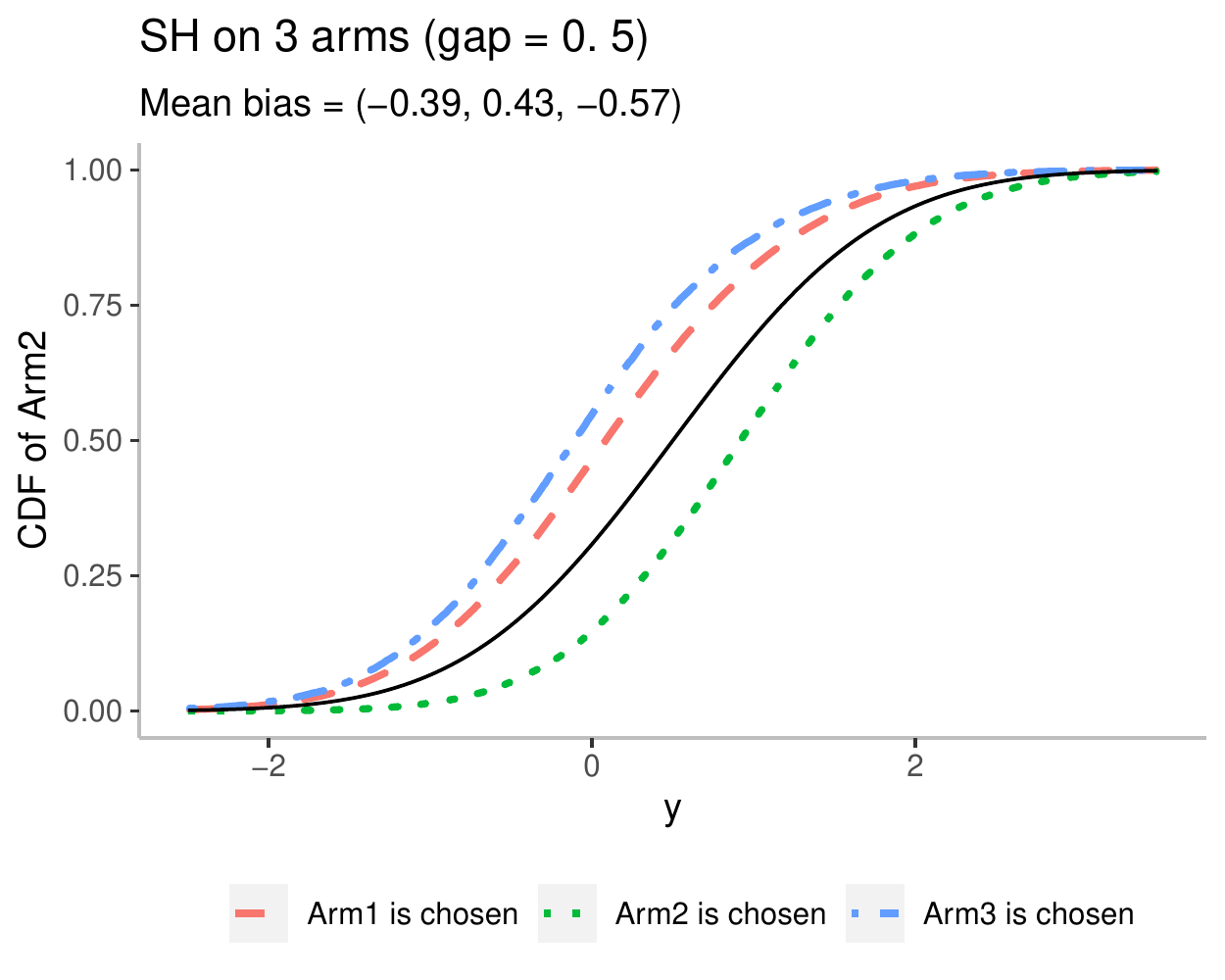}
    \end{center}
    \caption{\em Average of conditional empirical CDFs of arm 1 (left) and arm 2 (right) from  $10^5$ the sequential halving algorithm runs on three unit-variance normal arms with $\mu_1= 1, \mu_2 = 0.5$ and $\mu_3 = 0$, as described in \cref{Appen::experiment_SH}. Black solid lines refer to the true CDF of arm 1 and arm 2.}
    \label{fig::example1_6}
\end{figure}

 To verify the conditional bias results in \cref{cor::SH_marginal} and \ref{cor::SH_cond}, we conducted $10^5$ trials of the sequential halving algorithm on three unit-variance normal arms with $\mu_1 = 1, \mu_2 = 0.5$ and $\mu_3 = 0$ as we did for the lil'UCB algorithm in \cref{subSec::best_arm}. Again, it is important to note that the signs of the biases do not depend on the choice of parameters or of the underlying distributions, but the magnitudes of the biases do. To best illustrates the bias results, we use an unusually small time budget $T = 10$ in this experiment.

The left side of \cref{fig::example1_6} shows the averages of the empirical CDFs of arm 1 (the arm with the largest mean) conditionally on each arm being chosen as the best arm. The thick black line corresponds to the true underlying CDF. The red dashed line, which lies below the true CDF, indicates that the empirical CDF of arm 1 conditionally on the event that  arm 1 is chosen as the best arm (i.e., $\kappa = 1$) is negatively biased; this then implies that the sample mean of the chosen arm is positively biased. In contrast, the green dotted and blue dot-dashed lines, lying above the true CDF, show that conditionally on the event that arm 1 is not chosen as the best arm (i.e., $\kappa \neq 1$), the empirical CDF is positively biased and the sample mean is negatively biased.

The right side of \cref{fig::example1_6} displays the averages of the empirical CDFs of arm 2. Though arm 2 is not the best arm, we can check that the signs of the conditional biases follow the same pattern as arm 1. Conditionally on the event that arm 2 is chosen as the best arm ($\kappa = 2$), the empirical CDF is negatively biased (green dotted line), but conditionally on the event that arm 2 is not chosen as the best arm ($\kappa \neq 2)$, the corresponding CDFs are now positively biased (red dashed and blue dot-dashed lines), as expected.

\subsection{Experiments on Conditional Biases of Sample Variance and Median in MABs}
As stated in \cref{sec::summary}, characterizing the bias of other important functionals such as sample variance and sample quantiles is an important open problem. In this subsection, we present a simulation study on the bias of sample variance and median. 

For a given $n \geq 2$ i.i.d. samples $X_1,\dots, X_n$ from a distribution $P$, the sample variance $\hat{\sigma}^2$ and median $\hat{m}$ are defined by
\begin{align}
    \hat{\sigma}^2 &= \frac{1}{n-1} \sum_{i=1}^n (X_i - \bar{X}_n)^2 \\
    \hat{m} &=\begin{cases}
    \frac{1}{2}\left(X_{(n/2)} + X_{(n/2 + 1)}\right) &\mbox{if $n$ is even}, \\
     X_{(\frac{n+1}{2})}& \mbox{if $n$ is odd},
    \end{cases}
\end{align}
where $\bar{X}_n$ corresponds to the sample mean and $X_{(i)}$ refers to the $i$-th smallest sample. 

It is well-known that for any distribution $P$ with finite variance $\sigma^2$, the sample variance $\hat{\sigma}^2$ is an unbiased estimator of $\sigma^2$. Also,  though the sample median is not necessarily unbiased, for any symmetric distribution $P$ including the normal distribution as a special case, the sample median is unbiased. However, for adaptively collected data from a MAB experiment, it is unclear whether the sample variance and median are unbiased or not. Furthermore, it is an open question how to characterize the bias of sample variance and median estimators if they are biased estimators. 

\begin{figure}
    \begin{center}
    \includegraphics[scale =  0.65]{./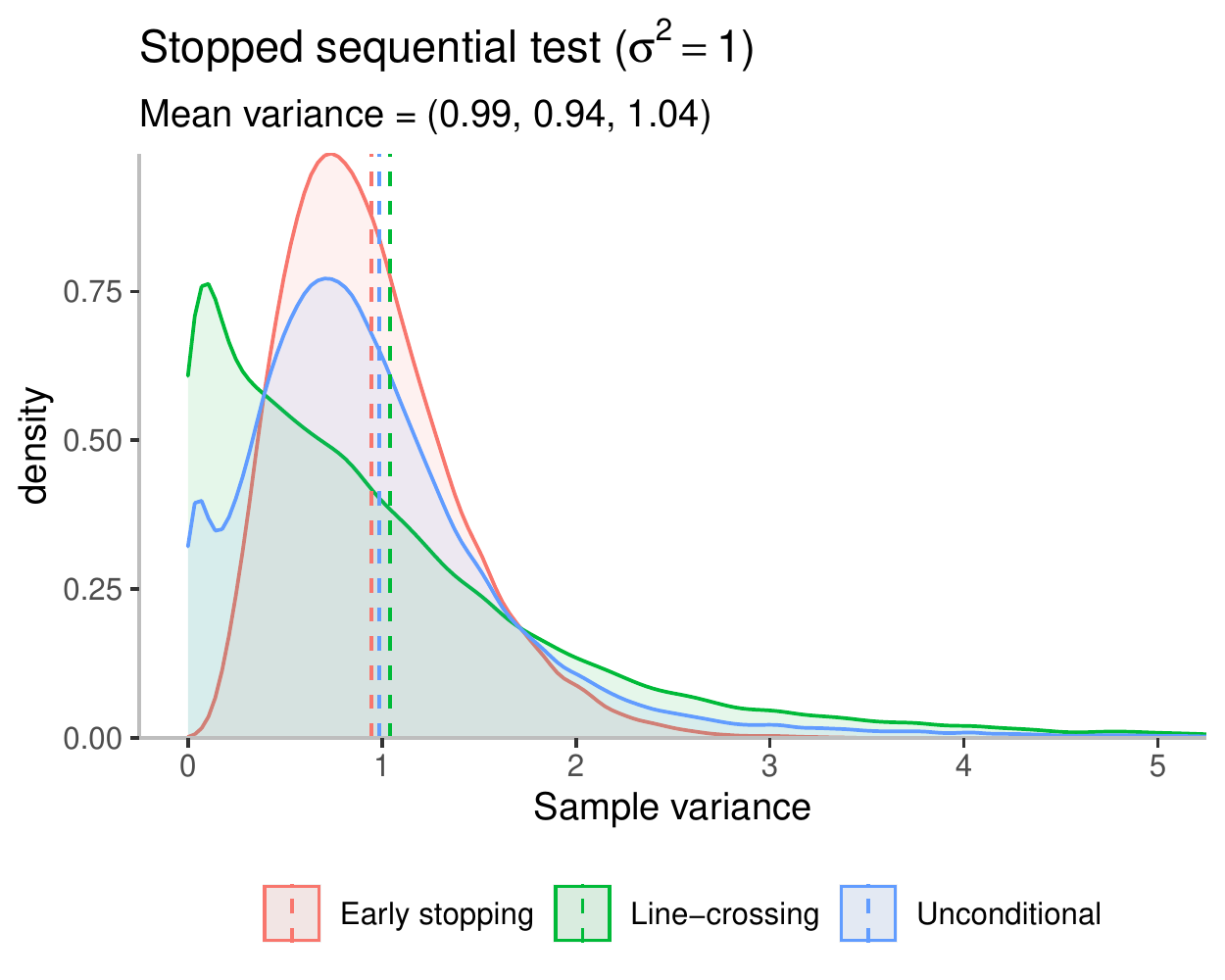}%
    \includegraphics[scale =  0.65]{./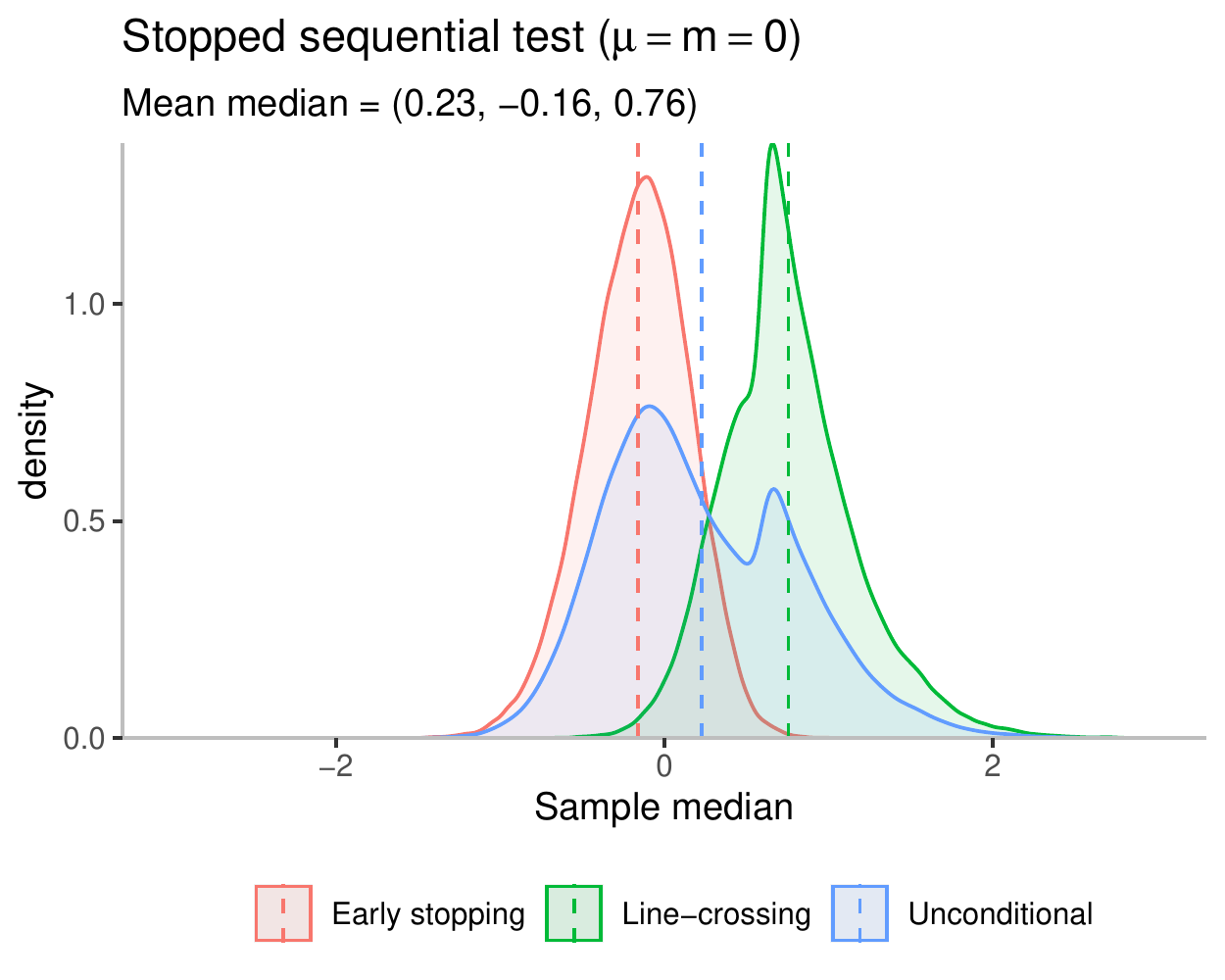}
    \end{center}
    \caption{\em Left: Densities of observed sample variances from repeated stopped sequential test as described in \cref{subSec::eg1}. Right: Densities of observed sample median from the same repeated stopped sequential test. For both figures, vertical dashed lines correspond to  averages of sample variances and medians on each conditions.}
    \label{fig::example1_var}
\end{figure}

As an initial step, we conduct the repeated sequential experiments described in \cref{subSec::eg1} and empirically investigate the biases of the sample variance and median estimators. \cref{fig::example1_var} describes a simulation study on the bias of the sample variance in the sequential testing setting of \cref{subSec::eg1}. Recall that, in this experiment, we have a stream of samples from a standard normal distribution. Each test terminates once either the number of samples reaches a fixed early stopping time $M = 10$ or the sample mean crosses the upper boundary $t \mapsto \frac{z_\alpha}{\sqrt{t}}$ with $\alpha = 0.2$. 

\cref{fig::example1_var} shows the marginal and conditional distributions of the sample variance and median from $10^5$ stopped sequential tests. Vertical lines correspond to averages of the sample variances and medians over repetitions of the experiment and under different conditions. For the sample variance, the simulation shows that the sample variance is negatively biased marginally and conditionally on the early stopping event. On the other hand, conditionally on the line-crossing event, the sample variance has a heavy right tail and is positively biased. For the sample median, we can check that, marginally and conditionally on the line-crossing event, the sample median is positively biased. In contrast, the sample median is negatively biased conditionally on the early stopping event. Note that, for the sample median, sizes of biases of the sample median are similar to ones from the sample means which were equal to $(0.22, -0.16, 0.75)$.

\end{document}